\begin{document}
\pretolerance=10000
\def\Ln{\mathcal{L}^{n}}
\def\L1{\mathcal{L}^{1}}
\def\Hn{\mathcal{H}^{n-1}}
\def\HN{\mathcal{H}^{N-1}}
\def\H1{\mathcal{H}^{1}}
\def\O{\Omega}
\newcommand{\ud}{{\mathrm d}}
\newcommand{\half}{\frac12}
\newcommand{\eqdef}{\stackrel{{\mathrm def}}{=}}
\newcommand{\M}{{\mathcal M}}
\newcommand{\loc}{{\mathrm{loc}}}
\newcommand{\dx}{\,\mathrm{d}x}
\newcommand{\core}{C_0^{\infty}(\Omega)}
\newcommand{\sob}{W^{1,p}(\Omega)}
\newcommand{\sobloc}{W^{1,p}_{\mathrm{loc}}(\Omega)}
\newcommand{\merhav}{{\mathcal D}^{1,p}}
\newcommand{\be}{\begin{equation}}
\newcommand{\ee}{\end{equation}}
\newcommand{\mysection}[1]{\section{#1}\setcounter{equation}{0}}
\newcommand{\bea}{\begin{eqnarray}}
\newcommand{\eea}{\end{eqnarray}}
\newcommand{\bean}{\begin{eqnarray*}}
\newcommand{\eean}{\end{eqnarray*}}
\newcommand{\thkl}{\rule[-.5mm]{.3mm}{3mm}}
\newcommand{\cw}{\stackrel{D}{\rightharpoonup}}
\newcommand{\id}{\operatorname{id}}
\newcommand{\supp}{\operatorname{supp}}
\newcommand{\wlim}{\mbox{ w-lim }}
\newcommand{\mymu}{{x_N^{-p_*}}}
\newcommand{\C}{{\mathbb C}}
\newcommand{\R}{{\mathbb R}}
\newcommand{\N}{{\mathbb N}}
\newcommand{\Z}{{\mathbb Z}}
\newcommand{\Q}{{\mathbb Q}}
\newcommand{\abs}[1]{\lvert#1\rvert}
\newtheorem{theorem}{Theorem}[section]
\newtheorem{corollary}[theorem]{Corollary}
\newtheorem{lemma}[theorem]{Lemma}
\newtheorem{definition}[theorem]{Definition}
\newtheorem{remark}[theorem]{Remark}
\newtheorem{proposition}[theorem]{Proposition}
\newtheorem{assertion}[theorem]{Assertion}
\newtheorem{problem}[theorem]{Problem}
\newtheorem{conjecture}[theorem]{Conjecture}
\newtheorem{question}[theorem]{Question}
\newtheorem{example}[theorem]{Example}
\newtheorem{Thm}[theorem]{Theorem}
\newtheorem{Lem}[theorem]{Lemma}
\newtheorem{Pro}[theorem]{Proposition}
\newtheorem{Def}[theorem]{Definition}
\newtheorem{Exa}[theorem]{Example}
\newtheorem{Exs}[theorem]{Examples}
\newtheorem{Rems}[theorem]{Remarks}
\newtheorem{Rem}[theorem]{Remark}

\newtheorem{Cor}[theorem]{Corollary}
\newtheorem{Conj}[theorem]{Conjecture}
\newtheorem{Prob}[theorem]{Problem}
\newtheorem{Ques}[theorem]{Question}
\newcommand{\pf}{\noindent \mbox{{\bf Proof}: }}
\newcommand{\Lag}{{\mathcal L}}

\renewcommand{\theequation}{\thesection.\arabic{equation}}
\catcode`@=11 \@addtoreset{equation}{section} \catcode`@=12

\title{On compactness in the Trudinger-Moser inequality}
\author{Adimurthi\\
 {\small TIFR CAM}\\
 {\small Sharadanagar, P.B. 6503}\\
 {\small Bangalore 560065, India}\\
{\small aditi@math.tifrbng.res.in}\\
\and Cyril Tintarev
\\
{\small Uppsala University}\thanks{Research done in part while visiting TIFR
Centre for Applicable
Mathematics in Bangalore and Chinese University of Hong Kong.}
\\
{\small P.O.Box 480}\\
{\small SE-751 06 Uppsala, Sweden}\\{\small
tintarev@math.uu.se}}

\date{}
\maketitle
\begin{abstract}
\begin{small}
We show that Moser functional 
$
J(u)=\int_\Omega \left(e^{4\pi u^2}-1\right)\dx, 
$ 
on the set
$\mathcal B=\{u\in H_0^1(\Omega):\; \|\nabla u\|_2\le 1\}$,
where $\Omega\subset\R^2$ is a bounded domain,
fails to be weakly continuous only in the following exceptional case. Define
$g_sw(r) = s^{-\frac12}w(r^s)$, $s>0$. 
If $u_k\rightharpoonup u$ in $\mathcal B$ while $\liminf J(u_k)>
J(u)$, then, with some $s_k\to 0$, 
$$u_k=g_{s_k}\left[(2\pi)^{-\frac12}\min\{1,\log\frac{1}{|x|}\}\right],$$
up to translations and up to a remainder vanishing in the Sobolev norm. 
In other words, the weak continuity fails only on translations of
concentrating Moser functions. 
The proof is based on a profile decomposition similar to that of Solimini
\cite{Solimini}, but with different concentration operators, pertinent to
the two-dimensional case.
\\[3mm]
\noindent  {\em 2010MSC:}\quad
Primary  \! 35J20, 35J35, 35J60; Secondary 46E35, 47J30, 58E05. \\[1mm]
 \noindent {\em Keywords:} Trudinger-Moser inequality, elliptic problems in two dimensions, concentration compactness, weak convergence, cocompactness, profile decomposition.
\end{small}
\end{abstract}
\mysection{Introduction}
The purpose of this paper is to study weak continuity properties 
in the Trudinger-Moser inequality at the same
level of detail as the better understood
weak continuity properties of the critical nonlinearity in higher dimensions.  
We draw comparison between the Sobolev inequality that defines the continuous
imbedding
$\mathcal D^{1,p}(\R^N)\hookrightarrow L^{p^*}$, $p^*=\frac{pN}{N-p}$, when $N>p$, 
and the Trudinger-Moser inequality (see Yudovich \cite{Yudovich}, Peetre \cite{Peetre}, Pohozhaev
\cite{Pohozhaev}, Trudinger \cite{Trudinger} and Moser \cite{Moser}):

\begin{equation}
\label{TM}
\sup_{\mathcal B}\int_\Omega e^{\alpha_N
|u|^{N'}}\dx<\infty, \quad \mathcal B=\{u\in W_0^{1,N}(\Omega):\, \|\nabla
u\|_N\le 1\},
\end{equation}
where $\Omega\subset\R^N$ is a bounded domain,
$N'=\frac{N}{N-1}$, 
$\alpha_N=N\omega_{N-1}^{1/(N-1)}$ is the optimal constant (due to
 Moser \cite{Moser}), and $\omega_{N-1}$ is the area of the unit
$N-1$-dimensional sphere. 
Using the notation $\|u\|_q$ for the $L^q$-norm of $u$, we fix  
the norm of $W_0^{1,N}$ as $\|\nabla u\|_N$. 
A ball in $\R^N$ of radius $R$ centered at $y$
will be denoted $B_R(y)$, abbreviated to $B_R$ when $y=0$, and to $B$ if
$y=0$ and $R=1$. 
We will refer to the functional
\begin{equation}
\label{MoserFunc}
J(u)\eqdef\int_\Omega \left(e^{\alpha_N |u|^{N'}}-1\right)\dx
\end{equation}
as Moser functional. 
Analogs of Trudinger-Moser inequality has been established 
for more general Sobolev spaces by Adams (\cite{Adams},
the case of higher derivatives), and by Fusco, Lions and Sbordone
(\cite{FLS}, a weighted version of the Trudinger-Moser inequality which allows
nonlinearities of very high growth).
Imbeddings $\mathcal D^{1,p}(\R^N)\hookrightarrow
L^{p^*}$, $p<N$, and $W_0^{1,N}(B)\hookrightarrow\exp L^{N'}$, 
 defined, respectively, by Sobolev and Trudinger-Moser inequalities,
are optimal when the target space in the class of Orlicz spaces. 
Further refinement of these imbeddings is possible, however, in the larger class of rearrangement-invariant spaces,
where the correspondent Orlicz spaces can be identified on the scales of Lorentz, resp. Lorentz-Zygmund, spaces 
as $L^{p^*}=L^{p^*,p^*}$, resp. $\exp L^{N'}=L^{\infty,\infty;-1/N}$.  
For further details we refer the
reader to  Appendix~A.
\vskip4mm
It is well known that the critical Sobolev
nonlinearity $\int_{\R^N} |u|^{p^*}\dx$.
lacks weak continuity in $\mathcal
D^{1,p}(\R^N)$ at any point $u$ (consider any sequence of the form
$u_k(x)=u(x)+k^\frac{N-p}{p}w(kx)$, $w\neq 0$, $k\to\infty$, and apply Brezis-Lieb
lemma). In contrast to that, according to the result of Lions, Theorem~I.6 in 
(\cite{PLL2b}, Moser functional on $\mathcal B$ is weakly continuous at any point except zero, and it is also weakly continuous on every sequence in $\mathcal B$ 
that converges weakly to zero, unless $\|\nabla u_k\|_N\to 1$ and $u_k$ has exactly one point of concentration.

In restriction to radially symmetric functions, the result of Lions can be further refined by using
calculations from the paper \cite{Moser} of Moser, which allow to infer that
Moser functional on $\mathcal B$ lacks weak continuity only (up to
a remainder vanishing in $W_{0}^{1,N}$) on a sequence of Moser
functions \eqref{MF}, concentrating at the origin. We reproduce these
calculations in Appendix B.

This indicates that, even without the assumption of radial symmetry, the class of sequences on which the Moser functional fails to be weakly continuous, may be characterized not just by the mere property of concentration at one point, but by a specific asymptotic behavior. The main result of this paper, proved here for the case $N=p=2$, is that this class consists of the same sequences as in the radial case, but subjected to arbitrary translations. This is consistent with the observation made here that the concentrating sequences that do not vanish in the $\exp L^2$-norm are always asymptotically radial, in contrast with the case $N>2$. By a concentrating sequence we mean here a sequence bounded in the corresponding Sobolev space, convergent almost everywhere to zero, but which do not vanish in the $L^{p^*}$-norm for $N>p$, resp. in $\exp L^{N'}$-norm for $N=p$. In the  case $N>p$, any (generally nonradial) function $w$ can occur as a concentration profile, since the sequence $u_k(x)=k^{\frac{N-2}{2}}w(kx)$ will be a concentrating sequence, whose normalized deflations by the scale factor $k^{-1}$, that is, $k^{-\frac{N-2}{2}}u_k(k^{-1}x)$, equal $w$. In the case $N=p=2$,  the analogous sequence $u_k(x)=w(kx)$, vanishes in $\exp L^2$, i. e. it is not concentrating. The relevant counterpart of  the rescaling deflations, presented in this paper  for $N=2$,  forms sequences with discrete rotational symmetries whose rank goes to infinity,  forming in the limit radial concentration profiles.

In order to define concentration that describes, for sequences bounded in the $H^1_0$-norm, the defect of convergence in the $\exp L^2$-norm, we prove a suitable profile decomposition, similar to the decomposition in \cite{AOT} for the radial case. Our starting point (we do not survey here a vast earlier literature where profile decompositions are established under substantial additional assumptions, typically, for critical sequences for elliptic variational problems, mentioning only the pioneering work of Struwe \cite{Struwe84}) is the profile decomposition due to Solimini \cite{Solimini} (a similar
decomposition was independently proved by Gerard \cite{Gerard} and
extended to more general spaces by Jaffard \cite{Jaffard}, as well as by one of
the authors of this paper) expresses a subsequence of an arbitrary bounded
sequence in the Sobolev space $\mathcal D^{1,p}(\R^N)$, $N>p$,
as an asymptotic sum whose terms have the form $t_k^\frac{N-p}{p}w(t_kx)$, with
a remainder vanishing in $L^{p^*}$.
In the paper \cite{acc} (elaborated in \cite{ccbook}) existence of profile decomposition was proved, for the general Hilbert space equipped with a group (subject to some general conditions) of unitary operators. This in turn gave
rise to the notion of cocompact imbedding of Banach spaces $X\hookrightarrow Y$, which is, roughly speaking, amounts to the vanishing in the norm of $Y$ of the remainder of the profile decomposition in $X$. 
The functional-analytic profile decomposition prompted us to define operators that could play the role of Solimini's ``rescalings'', with the remainder in the profile decomposition vanishing in $\exp L^{N'}$. 
In \cite{AOT}, dealing with the subspace of radial functions of $W_0^{1,N}(B)$, pertinent rescalings are given by \eqref{gauge}.  In this paper we have adopted an extension of the unitary operators
\eqref{gauge} for the case $N=2$ to isomertires \eqref{ourD} on the whole space
$W_{0}^{1,N}(B)$. These isometries are no longer bijective. Furthermore, they are defined only for the integer value of the parameter, and, while the operators \eqref{gauge} form a group, the set of operators
\eqref{ourD} is only a semigroup. On the other hand, it is exactly the absence of
bijectivity (in fact, of surjectivity, since isometries are always injective) that is ultimately responsible for the radial profiles of concentration profiles.

The results of the paper are as follows.
In order to establish the structure of the exceptional sequences for Moser
functional, we employ a straightforward adaptation,
Theorem~\ref{abstractcc}, of the functional-analytic profile decomposition
theorem from \cite{ccbook}. Theorem~\ref{2cc-j} is an application of
Theorem~\ref{abstractcc}
to the Sobolev space $H_0^1(B)$ equipped with the semigroup \eqref{ourD}. In 
Theorem~\ref{cocompactness} we  verify
that the remainder of the profile decomposition vanishes in the $\exp L^2$-norm (which is an equivalent quasinorm
of $L^{\infty,\infty;-1/2}$, or in other words, that the imbedding $H_0^1(B)\hookrightarrow L^{\infty,\infty;-1/2}$ is
cocompact. Combining this and the optimal imbedding 
$H_0^1(B)\hookrightarrow L^{\infty,2;-1}$, one gets by the H\"older inequality 
that the imbedding $H_0^1(B)\hookrightarrow L^{\infty,q;-1/q-1/2}$ is cocompact for any $q>2$.
By analogy with Solimini's counterexample on p.333 of \cite{Solimini}, we also show
that the optimal imbedding  $H_0^1(B)\hookrightarrow L^{\infty,2;-1}$ is not cocompact.
The main result of the paper is as follows.

 \begin{theorem}
 \label{thm:weak_discontinuity} Let $\Omega\subset \R^2$ be a bounded domain,
 and let $J$ be the Moser functional \eqref{MoserFunc}.  
 If $u_k\in H_0^1(\Omega)$ is such that $\|\nabla u_k\|_ 2\le 1$, $u_k\rightharpoonup u$,  and 
 $\liminf J(u_k)> J(u)$, then 
there is a sequence $\zeta_k\in\bar\Omega$ and a sequence $s_k\in(0,1)$ such that $u_k-m_{s_k}(\cdot-\zeta_k)\to 0$  in the $H^1$-norm,
where \begin{equation}
\label{MF}
m_s(r)\eqdef(\omega_{N-1})^{-\frac{1}{N}}\log(1/s)^{\frac{1}{N'}}
\min\left\lbrace\frac{\log(1/r)}{\log(1/s)},1\right\rbrace, \quad r,s\in(0,1).
\end{equation}
\end{theorem}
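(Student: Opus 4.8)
The plan is to combine the functional-analytic profile decomposition of Theorem~\ref{2cc-j} with the cocompactness of Theorem~\ref{cocompactness} in order to reduce the weak discontinuity of $J$ to a single concentrating, radial profile, and then to identify that profile as a Moser function by means of the radial computation of Appendix~B. First I would reduce to $u_k\in H^1_0(B)$ by a dilation (the Dirichlet integral is conformally invariant in the plane, $J$ only acquires a constant area factor, and $\{m_s(\cdot-\zeta)\}$ is invariant under dilations modulo $H^1$-null remainders) and pass to a subsequence with $J(u_k)\to\ell:=\liminf_k J(u_k)>J(u)$. By Lions' Theorem~I.6 quoted in the Introduction, $J$ is weakly continuous at every nonzero point of $\mathcal B$, so $u=0$; the same theorem also gives $\|\nabla u_k\|_2\to1$ and that $u_k$ concentrates at a single point. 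Then I apply Theorem~\ref{2cc-j} with Theorem~\ref{cocompactness}: along a further subsequence
\[
u_k=\sum_{n\ge1}D_k^{(n)}w^{(n)}+r_k ,
\]
where each $D_k^{(n)}$ comes from the semigroup \eqref{ourD} (composed, after localization at the concentration point, with a translation), $D_k^{(n)}w^{(n)}\rightharpoonup0$, every profile $w^{(n)}$ is \emph{radial}, the series converges in $H^1_0(B)$ uniformly in $k$, $\|r_k\|_{\exp L^2}\to0$, and one has the asymptotic Pythagoras relation $\sum_{n\ge1}\|\nabla w^{(n)}\|_2^2+\|\nabla r_k\|_2^2=\|\nabla u_k\|_2^2+o(1)\to1$.

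Next I would show that $J$ is negligible on the two inessential parts of the decomposition. If $v_k$ is bounded in $H^1_0(B)$ and $\varepsilon_k:=\|v_k\|_{\exp L^2}\to0$, then $J(v_k)\to0$: from $\int_B(e^{(v_k/\varepsilon_k)^2}-1)\le1$ and the elementary inequality $e^{\theta\tau}-1\le\theta(e^\tau-1)$ for $0\le\theta\le1$ and $\tau\ge0$, taking $\theta=4\pi\varepsilon_k^2$ gives $J(v_k)\le4\pi\varepsilon_k^2$. Applied to $r_k$, and—through the continuous embedding $H^1_0(B)\hookrightarrow\exp L^2$—to the tails $\sum_{n>N}D_k^{(n)}w^{(n)}$, this yields $\limsup_k J\bigl(\sum_{n>N}D_k^{(n)}w^{(n)}+r_k\bigr)\le\varepsilon_N$ with $\varepsilon_N\to0$. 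Moreover, a concentrating profile of \emph{subcritical} energy is $J$-null: if $\beta:=\|\nabla w^{(n)}\|_2^2<1$, then $\{e^{4\pi(D_k^{(n)}w^{(n)})^2}\}_k$ is bounded in $L^{1/\beta}(B)$ by the Trudinger–Moser inequality applied to $\beta^{-1/2}D_k^{(n)}w^{(n)}$, hence uniformly integrable, while the operators \eqref{ourD} make any concentrating sequence tend to zero almost everywhere (its amplitude at a fixed point is controlled by the logarithmic modulus of continuity of $H^1$-functions on circles); Vitali's theorem then gives $J(D_k^{(n)}w^{(n)})\to0$.

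The decisive step is a decoupling of $J$ along the decomposition. Because asymptotically orthogonal concentrating sequences carry the bulk of $e^{4\pi(\cdot)^2}-1$ on asymptotically disjoint sets, I expect $J$ to be asymptotically subadditive:
\[
\limsup_k J(u_k)\le\sum_{n=1}^{N}\limsup_k J\bigl(D_k^{(n)}w^{(n)}\bigr)+\limsup_k J\Bigl(\textstyle\sum_{n>N}D_k^{(n)}w^{(n)}+r_k\Bigr)
\]
for every $N$. Letting $N\to\infty$ and using the previous paragraph, $\ell>0$ forces at least one profile of \emph{unit} energy; the Pythagoras relation then forces all the other profiles to vanish and $\|\nabla r_k\|_2\to0$, so $u_k=D_k^{(1)}w^{(1)}+o_{H^1}(1)$ with $w^{(1)}$ radial and $\|\nabla w^{(1)}\|_2=1$. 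Undoing the localization, $D_k^{(1)}w^{(1)}=(g_{\sigma_k}w^{(1)})(\cdot-\zeta_k)+o_{H^1}(1)$ for some $\sigma_k\to0$ and $\zeta_k\in\bar\Omega$; by translation invariance the same subadditivity identity gives $J(g_{\sigma_k}w^{(1)})\to\ell>0=J(0)$ while $g_{\sigma_k}w^{(1)}\rightharpoonup0$ in the radial subspace of $H^1_0(B)$. The radial case of the theorem—Moser's computation of Appendix~B—then yields $g_{\sigma_k}w^{(1)}-m_{s_k}\to0$ in $H^1$ for some $s_k\in(0,1)$ with $s_k\to0$, and translating back (and discarding finitely many initial indices) gives $u_k-m_{s_k}(\cdot-\zeta_k)\to0$ in $H^1$.

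I expect the decoupling estimate to be the main obstacle: one has to promote the asymptotic orthogonality of the operators \eqref{ourD}, which is an $H^1$-statement, to an asymptotic disjointness of superlevel sets robust enough to survive the exponential nonlinearity, so that $J$ does not register the interaction of distinct profiles. A secondary technical nuisance is the passage to the genuinely radial framework of Appendix~B: one must check that the rotational symmetrizations implicit in \eqref{ourD} leave $J$ unchanged in the limit (their rank tends to infinity), and that after un-localizing, the part of $m_{s_k}(\cdot-\zeta_k)$ lying outside $\Omega$ when $\zeta_k\to\partial\Omega$ has vanishing $H^1$-norm—which is automatic since $s_k\to0$ far faster than $\operatorname{dist}(\zeta_k,\partial\Omega)$, as is itself forced by $\|\nabla u_k\|_{L^2(\Omega)}\to1$.
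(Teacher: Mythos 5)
Your proposal follows essentially the same route as the paper: reduce to $u_k\rightharpoonup 0$ via the known weak continuity of $J$ away from zero, apply the profile decomposition of Theorem~\ref{2cc-j} together with the cocompactness of Theorem~\ref{cocompactness}, decouple $J$ along the asymptotically orthogonal profiles, kill the subcritical profiles and the $\exp L^2$-null remainder, use \eqref{norms=j} to isolate a single unit-energy radial profile with an $H^1$-vanishing remainder, and identify it as a Moser function via Proposition~\ref{weakrad}. The decoupling step you flag as the main obstacle is exactly the point the paper also leaves to the reader (``leaving details of separating supports to the reader''), so the two arguments coincide in both substance and in what they leave implicit.
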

The functions \eqref{MF} were used by Moser in \cite{Moser}  to prove optimality of the constant in \eqref{TM}, and are usually called Moser functions.

Profile decompositions, Theorem~\ref{abstractcc} and Theorem~\ref{2cc-j} are proved in Section 2. The proofs of Theorem~\ref{cocompactness} and
of Theorem~\ref{thm:weak_discontinuity} are given in Section 3. Appendix~A provides some background material on imbeddings of Sobolev spaces into Lorentz and Lorentz-Zygmund spaces and Appendix B summarizes properties of Moser functional in the radial case.
\vskip3mm
One of the authors (T.) thanks Michael Cwikel, Lubo{\v s} Pick and Yevgeniy
Pustylnik for discussions in connection to Appendix A, and Sergio Solimini for enlightening comments about profile decompositions.

\mysection{Profile decomposition in $H_0^1$}
We give below a definition of isometric dislocations, which extends   
the definition of dislocation operators from \cite{ccbook}
to the case of non-surjective isometries. 

\begin{definition}
\label{dislocations} Let $H_1$ be a separable infinite-dimensional
Hilbert space and let $H_0$ be its closed subspace. A set $D$ of isometric linear operators from $H_0$ to $H_1$ is a set of {\em isometric dislocations} if, whenever $u_k\in H$ and $g_k$, $h_k \in D$,
\begin{equation} \label{newiii}  g_ku_k\rightharpoonup 0 ,
h_kg_k^* \not\rightharpoonup 0
\Rightarrow \exists \{k_j\}\subset \N,
h_{k_j}u_{k_j}\rightharpoonup 0.
\end{equation}
One says that a sequence $u_k$ is $D$-weakly convergent to zero if for every sequence $g_k\in D$, $g_ku_k\rightharpoonup 0$.
\end{definition}
Note that we deviate in this
section from the notations in \cite{ccbook} by interchanging the operator set
$D$ and the set of adjoints $D^*=\{g^*: g\in D\}$. 
This interchange is important for coherence with the applications in this paper,
while it is of no significance for the applications studied in \cite{ccbook},
or for most typical applications elsewhere when $D$ is a group of
unitary operators, and therefore $D^*=D$.

\begin{theorem}
\label{abstractcc}
Let $H_1$ be a separable infinite-dimensional Hilbert space with a closed subspace
$H_0$ and a set of isometric dislocations $D:H_0\to H_1$.
If $u_{k}\in H_0$ is a bounded sequence and $u_k\rightharpoonup 0$, then there
exists a set $\N_0\subset\N$, $w^{(n)}  \in
H$, $g_{k} ^{(n)} \in D$, $g_{k} ^{(1)} =id$, with $k \in\N$ and $n\in\N_0$,
such that for a renumbered subsequence,
\begin{eqnarray}
\label{w_n} &&w^{(n)}=\wlim  {g_{k} ^{(n)}}u_k,
\\
\label{separates} &&{g_{k} ^{(n)}} {g_{k}
^{(m)}} ^{*} \rightharpoonup 0  \mbox{ for } n \neq m,
\\
\label{norms} &&\sum_{n \in \N_0} \|w ^{(n)}\|^2  \le
\limsup \|u_k\|^2,
\\
\label{BBasymptotics} &&u_{k}  -  \sum_{n\in\N_0}  {g_{k} ^{(n)}}^*
w^{(n)}  \cw 0,
\end{eqnarray}
where the series $\sum_{n\in\N_0}  {g_{k} ^{(n)}}^* w^{(n)}$ converges
uniformly in $k$.
\end{theorem}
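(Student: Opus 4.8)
The plan is to adapt the inductive profile‑extraction argument of \cite{ccbook}, taking care that the operators of $D$ are isometries but need not be onto, so that $g^{*}g=\id$ on $H_0$ while $gg^{*}$ is only the orthogonal projection onto the range of $g$. Put $v_k^{(0)}\eqdef u_k$; having constructed $g_k^{(1)},\dots,g_k^{(n-1)}$ and $w^{(1)},\dots,w^{(n-1)}$ with $v_k^{(j)}\eqdef v_k^{(j-1)}-(g_k^{(j)})^{*}w^{(j)}$, I would set
\[
\delta_{n-1}\eqdef\sup\bigl\{\|w\|:\ w=\wlim g_{k_j}v_{k_j}^{(n-1)}\ \text{for some subsequence and some}\ g_k\in D\bigr\},
\]
noting that $\delta_{n-1}\le\limsup_k\|v_k^{(n-1)}\|$ because each $g_k$ is an isometry, and that $\delta_{n-1}=0$ precisely when $v_k^{(n-1)}\cw0$. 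If $\delta_{n-1}=0$ I stop and take $\N_0=\{1,\dots,n-1\}$; for $n=1$ this just records $g_k^{(1)}=\id$ and $w^{(1)}=\wlim u_k=0$. Otherwise I pick $g_k^{(n)}\in D$ and, after passing to a subsequence, $w^{(n)}$ with $g_k^{(n)}v_k^{(n-1)}\rightharpoonup w^{(n)}$ and $\|w^{(n)}\|\ge\tfrac12\delta_{n-1}>0$.

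Two facts drive the induction. First, the asymptotic orthogonality \eqref{separates}: assuming $g_k^{(i)}(g_k^{(j)})^{*}\rightharpoonup0$ for $i\ne j<n$, this identity (together with the weak nullity of $g_k^{(m)}v_k^{(m)}$, the delicate point flagged below) rewrites $g_k^{(m)}v_k^{(n-1)}$ for $m<n$ as a weakly null sequence, so the dislocation property \eqref{newiii}, applied with $g_k=g_k^{(m)}$, $h_k=g_k^{(n)}$ and the sequence $v_k^{(n-1)}$, shows that $g_k^{(n)}(g_k^{(m)})^{*}\not\rightharpoonup0$ would force a subsequence of $g_k^{(n)}v_k^{(n-1)}$ to converge weakly to $0$, contradicting $w^{(n)}\ne0$; hence $g_k^{(n)}$ may be taken so that $g_k^{(n)}(g_k^{(m)})^{*}\rightharpoonup0$ for all $m<n$, whereupon $g_k^{(n)}u_k-g_k^{(n)}v_k^{(n-1)}=\sum_{m<n}g_k^{(n)}(g_k^{(m)})^{*}w^{(m)}\rightharpoonup0$ yields \eqref{w_n}. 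Second, the energy inequality: expanding $\|v_k^{(n)}\|^{2}$ and using $\langle v_k^{(n-1)},(g_k^{(n)})^{*}w^{(n)}\rangle=\langle g_k^{(n)}v_k^{(n-1)},w^{(n)}\rangle\to\|w^{(n)}\|^{2}$ together with $\|(g_k^{(n)})^{*}w^{(n)}\|\le\|w^{(n)}\|$ --- the adjoint of an isometry being a contraction, which is where non‑surjectivity works in our favour --- gives $\limsup_k\|v_k^{(n)}\|^{2}\le\limsup_k\|v_k^{(n-1)}\|^{2}-\|w^{(n)}\|^{2}$; iterating yields \eqref{norms}, hence $\|w^{(n)}\|\to0$ and $\delta_{n-1}\to0$.

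To conclude I would diagonalise over the countably many steps. For the uniform‑in‑$k$ convergence of $\sum_n(g_k^{(n)})^{*}w^{(n)}$ I would, while extracting, thin the subsequences fast enough that the cross terms $\langle(g_k^{(i)})^{*}w^{(i)},(g_k^{(j)})^{*}w^{(j)}\rangle=\langle w^{(i)},g_k^{(i)}(g_k^{(j)})^{*}w^{(j)}\rangle$ are small for $k\ge\max\{i,j\}$; with $\|(g_k^{(n)})^{*}w^{(n)}\|\le\|w^{(n)}\|$ and $\sum_n\|w^{(n)}\|^{2}<\infty$ this bounds the partial‑sum tails uniformly in $k$, so $v_k^{(\infty)}\eqdef u_k-\sum_n(g_k^{(n)})^{*}w^{(n)}$ is well defined and $\|v_k^{(\infty)}-v_k^{(n)}\|\to0$ uniformly in $k$. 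Then $\delta(v_k^{(\infty)})\le\liminf_n\delta_{n-1}=0$, i.e.\ $v_k^{(\infty)}\cw0$, which is \eqref{BBasymptotics}.

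The step I expect to be delicate is exactly the loss of surjectivity. In the unitary‑group case $g_k^{(n)}(g_k^{(n)})^{*}=\id$, so subtracting $(g_k^{(n)})^{*}w^{(n)}$ makes $g_k^{(n)}v_k^{(n)}$ weakly null and the $n$‑th ``direction'' is cleared in one step, which is precisely what makes the weak nullity of $g_k^{(m)}v_k^{(m)}$ used above automatic; here $g_k^{(n)}(g_k^{(n)})^{*}w^{(n)}$ converges only to the weak limit of the projections of $w^{(n)}$ onto the moving ranges $g_k^{(n)}(H_0)$, so a single subtraction may leave a residue. I would handle this by passing at step $n$ to a further subsequence on which $g_k^{(n)}(g_k^{(n)})^{*}w^{(n)}$ converges weakly, carrying that residual limit along in the bookkeeping, and checking that anything not removed at step $n$ re‑enters the defect and is caught later; the energy inequality guarantees that every step still lowers $\limsup_k\|v_k^{(\cdot)}\|^{2}$ by at least $\tfrac14\delta_{\cdot}^{2}$, so the defects $\delta_n\to0$ regardless, and the inductive statement needed for \eqref{separates} --- the adjusted weak nullity --- is preserved by the construction. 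The existence of the operators $g_k^{(n)}$ and the remaining verifications of \eqref{w_n}--\eqref{BBasymptotics} are then routine, following \cite{ccbook}.
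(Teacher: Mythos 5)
Your proposal follows the same route as the paper's proof, which is itself an abbreviated adaptation of Theorem~3.1 of \cite{ccbook}: the inductive subtraction $v_k^{(n)}=v_k^{(n-1)}-(g_k^{(n)})^*w^{(n)}$, the near-maximal choice $\|w^{(n+1)}\|\ge\tfrac12 t_n$, the contradiction argument via \eqref{newiii} for the asymptotic orthogonality \eqref{separates}, the energy estimate for \eqref{norms}, and the rarefied-subsequence ``almost-Plancherel'' argument for the uniform convergence of the series and for \eqref{BBasymptotics}. The one substantive point of comparison is precisely the issue you flag: for a non-surjective isometry, $g_k^{(n)}(g_k^{(n)})^*$ is only the orthogonal projection onto the range of $g_k^{(n)}$, so the weak nullity of $g_k^{(n)}v_k^{(n)}$ is not automatic. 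You are right that this is where the loss of surjectivity bites; note that the paper's own proof does not address it either --- equation \eqref{g1v1} is written exactly as in the unitary case, i.e.\ it uses $g_k^{(1)}(g_k^{(1)})^*w^{(1)}=w^{(1)}$ without comment, and the same identity is used again to pass from $g_k^{(2)}(g_k^{(2)})^*w^{(2)}\rightharpoonup 0$ to $w^{(2)}=0$.

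However, your proposed repair --- extract a further subsequence so that $g_k^{(n)}(g_k^{(n)})^*w^{(n)}$ converges weakly, ``carry the residual along'' and let it be ``caught later'' --- is not yet a proof. Concretely, if $\rho^{(n)}:=w^{(n)}-\wlim g_k^{(n)}(g_k^{(n)})^*w^{(n)}\neq 0$, then $g_k^{(n)}v_k^{(n)}\rightharpoonup\rho^{(n)}\neq 0$, and the inductive statement your argument needs --- that $g_k^{(m)}v_k^{(n-1)}\rightharpoonup 0$ for $m<n$ --- fails; this statement feeds both the contradiction argument for \eqref{separates} and the identification \eqref{w_n} of $w^{(n)}$ as $\wlim g_k^{(n)}u_k$, so neither conclusion is recovered by bookkeeping alone. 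Worse, a residue extracted ``later'' would have to come out along dislocations that are not asymptotically orthogonal to $g_k^{(n)}$ (it lives at the same scales and centers), which would contradict \eqref{separates} itself; the energy inequality only guarantees that the total defect tends to zero, not that the stated orthogonality survives. The clean way to close the gap is to prove that, for the set $D$ one actually uses, $w=\wlim g_ku_k$ forces $g_kg_k^*w\rightharpoonup w$ (for \eqref{ourD} this is plausible because the range of $g_{j,\zeta}$ is the subspace of functions with $j$-fold rotational symmetry and the limit profiles are radial, but it must be checked), or else to impose this as an additional axiom on $D$. As written, both your argument and the paper's leave this step open; yours at least names it.
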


\begin{proof}
The proof is an elementary modification of the proof of Theorem~3.1 from
\cite{ccbook} and we give it in an abbreviated form. 

1. One shows first that (\ref{norms}) follows from (\ref{w_n})
and (\ref{separates}). The proof of this step is analogous to that in \cite{ccbook} and can be omitted.
\par
2. Observe that if $u_k\cw 0$, the theorem is verified with
$\N_0=\emptyset$. If not so, consider the expressions of
the form
\begin{equation}
\label{def_w1}
w^{(1)}=:\wlim {g_k^{(1)}}u_k.
\end{equation}
Since we assume that $u_k$ does not converge
$D$-weakly to zero, there exists necessarily a renumbered sequence
$g_k^{(1)}$ that yields a non-zero limit in (\ref{def_w1}).
\par
Let
\begin{equation*}
v_k^{(1)}= u_k - {g_k^{(1)}}^* w^{(1)},
\end{equation*}
and observe, by (\ref{def_w1}), that
\be \label{g1v1} 
{g_k^{(1)}}v_k^{(1)}={g_k^{(1)}}u_k-
w^{(1)}\rightharpoonup 0.\ee
If $v_k^{(1)} \cw 0$, the theorem is verified with $\N_0=\{1\}$.
If not -- we repeat the argument above -- there exist,
necessarily, a sequence $g_k^{(2)}\in D$ and a $w^{(2)}\neq 0$
such that, on a renumbered subsequence,
\begin{equation*}
g_k^{(2)}
v_k^{(1)} \rightharpoonup w^{(2)}.
\end{equation*}
Let us set
\begin{equation*}
v_k^{(2)}= v_k^{(1)} - {g_k^{(2)}}^* w^{(2)}.
\end{equation*}
Then we will have an obvious analog of (\ref{g1v1}):
\begin{equation}
\label{w2-1}  {g_k^{(2)}} v_k^{(2)}=
  {g_k^{(2)}}v_k^{(1)}  -
 w^{(2)} \rightharpoonup 0.
\end{equation}
If we assume that
\begin{equation*}
{g_k^{(1)}}{g_k^{(2)}}^*\not\rightharpoonup 0,
\end{equation*}
then by (\ref{newiii}), (\ref{w2-1}),
\begin{equation*}
{g_k^{(1)}} (v_k^{(1)} - {g_k^{(2)}}^*w^{(2)})\rightharpoonup 0,
\end{equation*}
which, due to (\ref{g1v1}), yields
\begin{equation}
\label{w2-4}{g_k^{(1)}} {g_k^{(2)}}^*w^{(2)}\rightharpoonup 0.
\end{equation}
We now use (\ref{newiii}) again to replace in (\ref{w2-4})
$ {g_k^{(1)}}$ with
$ {g_k^{(2)}}$, which results in
\be w^{(2)}\rightharpoonup 0,\ee which cannot be true since we
assumed $w^{(2)}\neq 0$. This contradiction implies that
\begin{equation*}
{g_k^{(1)}} {g_k^{(2)}}^* \rightharpoonup 0.
\end{equation*}
Since for bounded sequences of operators $A_k\rightharpoonup 0$
implies $A_k^*\rightharpoonup 0$, we also have
\begin{equation*}
{g_k^{(2)}} {g_k^{(1)}}^* \rightharpoonup 0.
\end{equation*}
Recursively we define:
\begin{equation}
\label{def:next_v} v_k^{(n)}: = v_k^{(n-1)}-{g_k^{(n)}}^* w^{(n)} =
u_k - {g_k^{(1)}}^* w^{(1)} - \dots - {g_k^{(n)}}^* w^{(n)},
\end{equation}
where
\begin{equation*}
w^{(n)}=\wlim
{g_k^{(n)}} v_k^{(n-1)},
\end{equation*}
calculated on a successively renumbered subsequence. We
subordinate the choice of $g_k^{(n)}$ and thus extraction of this
subsequence for every given $n$ to the following requirements. For
every $ n\in\N$ we set
\begin{equation*}
W_n=\{w\in H_1\setminus\{0\}:\;\exists g_j\in D,
\{k_j\}\subset\N: g_jv_{k_j}^{(n)}\rightharpoonup
w\},
\end{equation*}
and
\begin{equation*}
t_n=\sup_{w\in W_n}\|w\|.
\end{equation*}
 If for some $n$, $t_n=0$, the theorem is proved. Otherwise, like in \cite{ccbook} we choose a
$w^{(n+1)}\in W_n$ such that
\begin{equation}
\label{w-t} \|w^{(n+1)}\|\ge\frac12 t_n
\end{equation}
and the sequence $g_k^{(n+1)}$ is chosen so that on a subsequence
that we renumber,
\begin{equation}
\label{next_w}
{g_k^{(n+1)}}v_k^{(n)}
\rightharpoonup w^{(n+1)}.
\end{equation}
An argument analogous to the one brought above for $n=1,2$ shows
that
\begin{equation}
\label{separation_p_q} g_k^{(p)}{g_k^{(q)}}^*\rightharpoonup 0 \mbox{
whenever } p \neq q, p,q\leq n.
\end{equation}
This allows to deduce immediately (\ref{w_n}) from (\ref{next_w}).

3. Similarly to \cite{ccbook} one derives that $t_n\to 0$ from which
subsequently follows the asymptotics \eqref{BBasymptotics}. The convergence of
the series \eqref{BBasymptotics}, like in \cite{ccbook}, is a modification of
Plancherel formula that requires to extract a sufficiently rarefied subsequence
of $u_k$ to assure sufficient approximation of orthogonality by the
asymptotically orthogonal terms ${g_k^{(n)}}^*w^{(n)}$. 
\end{proof}

From now on we assume, without loss of generality that $\Omega\subset
B_\frac12$. This restriction is not substantial and
can be removed by linear rescaling, since, if we denote the Moser function
subordinated to an annulus $t<r<R$ as $m_t^{(R)}$, an easy computation shows
that, for any $R>0$,
\[
\lim_{t\to 0}\|\nabla(m_t^{(R)}-m_t)\|_2\to 0.
\]

Let us now specify $H_1$, $H_0$  and $D$ as follows: $H_1=H^1_0(B)$, $H_0=H_0^1(\Omega)$, and

\begin{equation}
\label{ourD}
D=\{g_{j,\zeta}u(z)=j^{-\frac12}u(\zeta+z^j), \zeta\in \bar\O, j\in\N\}.
\end{equation}
Here and in what follows, the expression $z^j$ stays for the power of the
complex number representing a point in $z\in\R^2$, and translations of
functions $H^1_0(\Omega)$ are understood, using extension by zero, as elements
of $H^1_0(B)$. 

We have the following obvious property of the asymptotic profiles \eqref{w_n}.
\begin{remark}
If the sequences $u_k\in H_0^1(\Omega)$, $j_k\in\N$, and $\zeta_k\in
\Omega$ are such that $g_{j_k,\zeta_k}u_k\rightharpoonup w$, $j_k\to\infty$
and $z_k\to z_0$, then $w$ is radially symmetric.   
\end{remark}
There is also an obvious analytic form  of \eqref{separates}.
\begin{lemma}
Let $D$ be the set of operators as above. Two sequences,
$\{g_{j_k^{(1)},z_k^{(1)}}\}_k\subset D$ and
$\{g_{j_k^{(2)},z_k^{(2)}}\}_k\subset D$, with $j_k^{(1)}\to\infty$ and
$j_k^{(2)}\to\infty$, satisfy 
\[
g_{{j_k^{(2)},z_k^{(2)}}}{g^*_{{j_k^{(1)},z_k^{(1)}}}}\rightharpoonup 0
\]
if and only if 
\begin{equation}
\label{asymptort}
\inf_k|z_k^{(2)}-z_k^{(1)}|>0 \mbox{ or } |\log j_k^{(2)}-\log j_k^{(1)}|\to
\infty.
\end{equation}
\end{lemma}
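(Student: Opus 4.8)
The plan is to reduce the operator statement to an explicit computation of the kernel of the composition $g_{j_k^{(2)},z_k^{(2)}}\,g^*_{j_k^{(1)},z_k^{(1)}}$ acting on test functions, and then to analyze when this kernel tends weakly to zero. First I would write out the adjoint: since $g_{j,\zeta}$ maps $H_0^1(\Omega)\to H_0^1(B)$ by $u\mapsto j^{-1/2}u(\zeta+z^j)$, its Hilbert-space adjoint $g^*_{j,\zeta}:H_0^1(B)\to H_0^1(\Omega)$ is, up to the change of variables $w=\zeta+z^j$ (whose inverse is a $j$-valued map, reflecting non-surjectivity), essentially $v\mapsto j^{-1/2}\sum_{\text{branches}} v\big((\cdot-\zeta)^{1/j}\big)$, modulo cutoff issues near $\partial\Omega$. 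The composition $A_k:=g_{j_k^{(2)},z_k^{(2)}}\,g^*_{j_k^{(1)},z_k^{(1)}}$ therefore acts on $H_0^1(B)$ by a change of variables of the form $z\mapsto$ (branches of) $\big(z_k^{(2)}-z_k^{(1)} + z^{j_k^{(2)}}\big)^{1/j_k^{(1)}}$, together with the scalar factor $(j_k^{(1)}j_k^{(2)})^{-1/2}$ times a Jacobian contribution — and I would record this change-of-variables map carefully as Step 1.

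Step 2 is the ``if'' direction. Suppose \eqref{asymptort} holds. In the first case, $\inf_k|z_k^{(2)}-z_k^{(1)}|=:\delta>0$: then for a fixed test function $\varphi\in C_0^\infty(B)$, the support of $g^*_{j_k^{(1)},z_k^{(1)}}\varphi$ is contained in the preimage under $w\mapsto z_k^{(1)}+w^{j_k^{(1)}}$ of $\supp\varphi$, a set of radii bounded away from some annulus, and after applying $g_{j_k^{(2)},z_k^{(2)}}$ one checks that the overlap of the resulting gradient against any fixed $\psi\in C_0^\infty(B)$ goes to zero because the geometry forces the ``active'' region to shrink (the power maps with $j_k\to\infty$ concentrate mass near $|z|=1$, and the displacement $\delta$ prevents alignment). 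In the second case $|\log j_k^{(2)}-\log j_k^{(1)}|\to\infty$, the operator $A_k$ is, modulo lower order, a pure rescaling $g_{j_k^{(2)}/j_k^{(1)}}$-type map (in the notation $g_s w(r)=s^{-1/2}w(r^s)$ from the abstract); one uses the classical fact that the dilation group acts without invariant vectors — $g_{s_k}\rightharpoonup 0$ in $H_0^1$ as $s_k\to 0$ or $s_k\to\infty$ — to conclude $A_k\rightharpoonup 0$. I would lean on the radial rescaling computations of \cite{AOT} / Appendix B here rather than redo them.

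Step 3 is the ``only if'' (contrapositive) direction: suppose \eqref{asymptort} fails, i.e. (along a subsequence) $z_k^{(2)}-z_k^{(1)}\to c$ for some finite $c$ (in fact, since displacements must be small for the profiles to survive, one expects $c$ in the relevant bounded set) \emph{and} $\log j_k^{(2)}-\log j_k^{(1)}$ stays bounded, hence (passing to a further subsequence) $j_k^{(2)}/j_k^{(1)}\to\lambda\in(0,\infty)$. Then I would exhibit an explicit $v\in H_0^1(B)$ and an explicit test function against which $\langle A_k v,\cdot\rangle$ does not go to zero: the natural candidate is a truncated-logarithm (Moser) profile adapted to the common scale, for which the change of variables in Step 1 converges, as $k\to\infty$, to an honest nonzero isometry (a pure radial rescaling by $\lambda$ composed with the limiting branch structure). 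Concretely, $A_k w^{(n)}$ for $w^{(n)}$ a Moser profile converges strongly to a nonzero radial function, so $A_k\not\rightharpoonup 0$.

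The main obstacle I expect is Step 1 together with the support/cutoff bookkeeping in Step 2's first case: the adjoint $g^*_{j,\zeta}$ is genuinely multivalued because $z\mapsto z^j$ is $j$-to-$1$, and one must be careful that (a) the $j$ branches are summed with the correct weights so that $g^*_{j,\zeta}$ really is the adjoint on $H_0^1$, not just on $L^2$, since the inner product here is $\int\nabla\cdot\nabla$, and (b) the extension-by-zero and the restriction to $\Omega\subset B_{1/2}$ interact correctly with the power map near $|z|=1$. Once the kernel of $A_k$ is pinned down explicitly, both implications become computations about where mass of $\big|\nabla(\text{power-map pullback})\big|^2$ sits, and the dichotomy in \eqref{asymptort} is exactly the dichotomy ``displacement survives'' versus ``scales separate'' that makes weak limits vanish. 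I would present Step 3 first as motivation (it is short and shows the condition is sharp), then do Steps 1--2.
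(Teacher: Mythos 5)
The paper states this lemma without proof (it is introduced as the ``obvious analytic form'' of \eqref{separates}), so there is no argument of the authors to compare yours against; I can only judge the plan on its own terms. Your Step 1 and the ``if'' direction are sound in outline: the adjoint is indeed $g^*_{j,\zeta}\psi = P_{H_0^1(\Omega)}\bigl[j^{-1/2}\sum_{m=0}^{j-1}\psi\bigl(e^{2\pi i m/j}(\cdot-\zeta)^{1/j}\bigr)\bigr]$, the sum over branches annihilates every angular Fourier mode of $\psi$ whose index is not divisible by $j$ (so only the angular average survives as $j\to\infty$ --- the radialization the paper alludes to), and with this in hand both cases of your Step 2 reduce to computations with radial profiles, the second via the group \eqref{gauge} exactly as you propose. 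The difficulties you flag --- the $H^1$ versus $L^2$ adjoint and the projection back onto $H_0^1(\Omega)$ --- are real but manageable (interior gradient estimates control the harmonic correction on the tiny concentration balls).

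The genuine gap is in Step 3. Negating \eqref{asymptort} only gives you, on a subsequence, $d_k:=|z_k^{(2)}-z_k^{(1)}|\to 0$ and $j_k^{(2)}/j_k^{(1)}\to\lambda\in(0,\infty)$; it does \emph{not} give you that $d_k$ is small compared with the scale on which $g^*_{j_k,\zeta_k}v$ actually lives. From the formula above, for fixed $v$ and any $\epsilon>0$ there is $\rho<1$ such that all but $\epsilon$ of the Dirichlet mass of $g^*_{j,\zeta}v$ sits in $|w-\zeta|\le\rho^{j}$, i.e.\ at super-exponentially small distances from $\zeta$. If, say, $j_k^{(1)}=j_k^{(2)}=k$ and $d_k=1/k$, then $d_k\to0$ but $d_k^{1/j_k}\to1$: the two concentration regions separate, and a direct computation (split $B$ into $B_{\delta_k}(z_k^{(1)})$, $B_{\delta_k}(z_k^{(2)})$ and the rest, with $\delta_k=e^{-\sqrt k}$) shows $\langle g^*_{j_k,z_k^{(1)}}v,\,g^*_{j_k,z_k^{(2)}}\psi\rangle\to0$ for \emph{every} $v,\psi$. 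So no witness of the kind you describe exists, and your claim that the change of variables ``converges to an honest nonzero isometry'' fails in precisely the regime that the negation of \eqref{asymptort} permits. This is not merely a hole in your write-up: it shows the ``only if'' half of the equivalence cannot be proved as literally stated. The displacement must be measured on the logarithmic scale of the semigroup; the correct dichotomy is $|\log j_k^{(2)}-\log j_k^{(1)}|\to\infty$ or $j_k^{-1}\log(1/d_k)\to0$, of which $\inf_k d_k>0$ is only a special case. Your Step 3 argument does go through under the stronger hypothesis $d_k\le\rho^{j_k}$ for some fixed $\rho<1$, and that is the version you should prove; as written, the step --- and the biconditional --- need this correction.
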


This allows to express Theorem~\ref{abstractcc} for our particular choice of $H_1$, $H_0$ and $D$ as follows.

\begin{theorem}
\label{2cc-j} Let $\Omega\subset B_\frac12\subset\R^2$ and let
$u_k\rightharpoonup 0$ be a sequence  in $H^{1}_{0}(\Omega)$. There
exist $j_k^{(n)}\in \N$, $\lim_{k\to\infty} j_k^{(n)}=\infty$,  and
$z_k^{(n)}\in \bar \Omega$, 
$\lim_{k\to\infty} z_k^{(n)}= z_n\in\bar\Omega$, $k \in\N$, $n\in\N$, 
such that  for a renumbered subsequence,
\begin{eqnarray}
\label{w_n=j} &&w^{(n)}(|z|)=\wlim \left({j_k^{(n)}}\right)^{-1/2}u_k(z_k^{(n)}+z^{j_k^{(n)}}),
\\
\label{separates=j} &&    z_m\neq z_n \mbox{ or } |\log j_k^{(m)}-\log j_k^{(n)}|\to \infty  \mbox{ whenever } n \neq m,
\\
\label{norms=j} &&\sum_{n \in \N} \int_B|\nabla w ^{(n)}|^2\dx  \le \limsup \int_\Omega|\nabla u_k|^2\dx,
\\
\label{BBasymptotics=j} &&u_{k}  -  \sum_{n\in\N}  {{j_k^{(n)}}}^{1/2}w^{(n)}(|z-z_n|^{1/j_k^{(n)}}) \cw 0,
\end{eqnarray}
and the series  $\sum_{n\in\N}  {{j_k^{(n)}}}^{1/2}w^{(n)}(|z-z_n|^{1/j_k^{(n)}})$ converges in $H^1_{0}(B)$
uniformly in $k$.
\end{theorem}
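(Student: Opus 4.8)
The plan is to show that Theorem~\ref{2cc-j} is nothing but the verbatim specialization of the abstract profile decomposition Theorem~\ref{abstractcc} to the concrete data $H_1=H_0^1(B)$, $H_0=H_0^1(\Omega)$ and $D$ given by \eqref{ourD}, once we have checked that $D$ is a genuine set of isometric dislocations in the sense of Definition~\ref{dislocations}. First I would record that each $g_{j,\zeta}$ is indeed a linear isometry from $H_0^1(\Omega)$ into $H_0^1(B)$: the map $z\mapsto \zeta+z^j$ sends $B$ into $B_{1/2}+\bar\Omega\subset B$ (using the reduction $\Omega\subset B_{1/2}$), and the change of variables $w=z^j$ shows that $\int_B|\nabla(j^{-1/2}u(\zeta+z^j))|^2\,dz=\int|\nabla u|^2$, because in two dimensions the Dirichlet integral is conformally invariant and $z\mapsto z^j$ is holomorphic with $|\,dw|=j|z|^{j-1}|\,dz|$, the Jacobian factor exactly cancelling the $j^{-1}$ prefactor; the extension-by-zero convention makes $g_{j,\zeta}u$ an element of $H_0^1(B)$. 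Since the operators are not surjective (their ranges consist of functions with a $j$-fold rotational symmetry around $\zeta$), $D$ is only a semigroup, which is precisely why Definition~\ref{dislocations} was set up for non-surjective isometries.

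The substantive point is the dislocation property \eqref{newiii}, and here the mechanism is exactly the Lemma stated just above: the compactness-defect condition $g_{j_k^{(2)},z_k^{(2)}}g_{j_k^{(1)},z_k^{(1)}}^*\not\rightharpoonup 0$ fails precisely when $z_k^{(2)}-z_k^{(1)}\to 0$ (along a subsequence) \emph{and} $|\log j_k^{(2)}-\log j_k^{(1)}|$ stays bounded, i.e. when the two ``scales'' and the two ``centers'' are asymptotically comparable. In that regime one can pass to a subsequence along which $j_k^{(2)}/j_k^{(1)}\to\lambda\in[1,\infty)$ (or its reciprocal) with $\lambda$ an integer ratio in the limit (since the $j$'s are integers, boundedness of the log-difference forces $j_k^{(2)}/j_k^{(1)}$ to be eventually constant, equal to a fixed rational $\mu$, in fact to a ratio of two fixed integers), and $z_k^{(1)},z_k^{(2)}\to z_0$; then $g_{j_k^{(2)},z_k^{(2)}}=g_{j_k^{(1)},z_k^{(1)}}\circ(\text{a fixed operator }g_{\mu',z'})$ up to an error tending to zero strongly, so $h_{k_j}u_{k_j}=g_{j_{k_j}^{(2)},z_{k_j}^{(2)}}u_{k_j}$ differs by a weakly-null term from $g_{\mu',z'}^*$ — no, rather from applying a fixed bounded operator to $g_{j_{k_j}^{(1)},z_{k_j}^{(1)}}u_{k_j}\rightharpoonup 0$, hence is itself weakly null. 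This verifies \eqref{newiii}. The continuity of $g_{j,\zeta}$ in $\zeta$ and the integrality/discreteness of $j$ are what make these limiting manipulations legitimate; I expect the bookkeeping of ``up to strongly vanishing errors'' to be the one genuinely delicate piece, and it is essentially the content of the Lemma, which we are entitled to assume.

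With $D$ certified as a set of isometric dislocations, the conclusions of Theorem~\ref{2cc-j} are obtained by transcribing those of Theorem~\ref{abstractcc}: \eqref{w_n=j} is \eqref{w_n} written out with ${g_k^{(n)}}^{-1}=\dots$ wait — it is \eqref{w_n} with $g_k^{(n)}=g_{j_k^{(n)},z_k^{(n)}}$, and the limit profile $w^{(n)}$ is radially symmetric by the Remark preceding the Lemma (any weak limit of $u\mapsto j_k^{-1/2}u(z_k+z^{j_k})$ with $j_k\to\infty$ is invariant under all rotations, hence radial); \eqref{separates=j} is the analytic form \eqref{asymptort} of \eqref{separates} supplied by the Lemma; \eqref{norms=j} is \eqref{norms} with the Hilbert norm written as the Dirichlet integral; and \eqref{BBasymptotics=j} together with the uniform-in-$k$ $H_0^1$-convergence of the series is \eqref{BBasymptotics}, after noting that $D$-weak convergence of the remainder to zero is, in this concrete setting, the statement displayed. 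One subtlety to address explicitly is that Theorem~\ref{abstractcc} only guarantees $g_k^{(1)}=\mathrm{id}$ and a priori allows finitely many $j_k^{(n)}$ to stay bounded; but a bounded sequence of integers $j_k^{(n)}$ is eventually constant on a subsequence, and then $g_{j,\zeta_k}u_k\rightharpoonup 0$ for the corresponding fixed $j$ by weak continuity of a fixed isometry together with $u_k\rightharpoonup0$, so such terms contribute $w^{(n)}=0$ and may be discarded; this justifies restricting attention to $j_k^{(n)}\to\infty$ and taking $z_k^{(n)}$ convergent in $\bar\Omega$ by compactness. That completes the deduction.
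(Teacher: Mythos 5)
Your proposal follows essentially the same route as the paper, which offers no separate proof of Theorem~\ref{2cc-j} but presents it as the direct specialization of Theorem~\ref{abstractcc} to $H_1=H_0^1(B)$, $H_0=H_0^1(\Omega)$ and the semigroup \eqref{ourD}, with the preceding Remark supplying the radiality of the profiles and the Lemma supplying the analytic form \eqref{asymptort} of \eqref{separates}; your extra work (the conformal-invariance computation for the isometry property, the discarding of bounded scale sequences $j_k^{(n)}$, and the sketch of the dislocation axiom \eqref{newiii}) only adds detail that the paper leaves implicit. Two minor slips are worth correcting but do not affect the argument: the image of $B$ under $z\mapsto\zeta+z^j$ is \emph{not} contained in $B$ (what actually confines $\operatorname{supp}(g_{j,\zeta}u)$ to a compact subset of $B$ is that the preimage of $\Omega\subset B_{1/2}$ under this map lies in a ball of radius strictly less than $1$), and boundedness of $|\log j_k^{(2)}-\log j_k^{(1)}|$ for integers does not force the ratio $j_k^{(2)}/j_k^{(1)}$ to be eventually constant, only to converge along a subsequence, so the clean factorization $h_k=T\circ g_k$ requires the approximation bookkeeping you already flag as the content of the Lemma, which the paper likewise asserts without proof.
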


We note also that for any radially symmetric function $w\in H_0^1(B))$, the
sequence ${g_{j,\zeta}}^*w=j^\frac12w(|z-\zeta|^j)$ is dependent on $\zeta$
continuously in $H^1_0(B)$ and uniformly in $j\in\N$, so in the asymptotic
expansion \eqref{BBasymptotics=j} we could replace 
${g^*_{j_k^{(n)},z_k^{(n)}}}w^{(n)}$ by 
${g^*_{j_k^{(n)},z_n}}w^{(n)}$.

We complement this profile decomposition by the statement below, which
identifies the convergence of the remainder in \eqref{BBasymptotics=j} in
as convergence in the Banach space $\exp L^2$.

\begin{theorem}
\label{cocompactness}
Let $\Omega\subset B_\frac12$, and let $D$ be the set \eqref{ourD}. 
If a sequence $u_k\in H_0^1(\O)$ is $D$-weakly
convergent to zero, then $u_k\to 0$ in $\exp
L^2$. In particular, for any $\lambda>0$,
$\int_\Omega \left(e^{\lambda u_k^2}
-1\right)\dx\to 0$.
\end{theorem}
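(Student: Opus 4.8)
The plan is to argue by contradiction using the profile decomposition machinery together with explicit estimates on the functions $g_{j,\zeta}^*w$. Suppose $u_k$ is $D$-weakly convergent to zero but $u_k \not\to 0$ in $\exp L^2$; passing to a subsequence we may assume $\|\nabla u_k\|_2 \le C$ and, since $D$-weak convergence to zero implies $u_k \rightharpoonup 0$ in $H_0^1(\Omega)$ (take $g_k = \id$), that $u_k \rightharpoonup 0$ weakly. Then $u_k \to 0$ in $L^q(\Omega)$ for every finite $q$, so $\int_\Omega(e^{\lambda u_k^2}-1)\dx \to 0$ would follow from a uniform bound of $\int_\Omega e^{\mu u_k^2}\dx$ for some $\mu > \lambda$ by interpolation. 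The failure of $u_k \to 0$ in $\exp L^2$ means, equivalently, that $\sup_k \int_\Omega(e^{\lambda_0 u_k^2}-1)\dx = \delta > 0$ for some $\lambda_0$; so it suffices to rule this out. Here one must be careful that $\|\nabla u_k\|_2$ need not be $\le 1$, but a small-energy version of the Trudinger--Moser inequality (if $\|\nabla v\|_2 \le \varepsilon$ then $\int_\Omega(e^{\lambda v^2}-1)\dx \le C\varepsilon^2$, valid for $\lambda\varepsilon^2 < 4\pi$) localizes the problem to where the gradient concentrates.

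The key step is to extract a concentration profile. Apply the profile decomposition of Theorem~\ref{2cc-j} to $u_k$ (after the linear rescaling reduction so that $\Omega \subset B_{1/2}$). Since $u_k$ is $D$-weakly convergent to zero, every weak limit $w^{(n)} = \wlim\, (j_k^{(n)})^{-1/2}u_k(z_k^{(n)} + z^{j_k^{(n)}})$ must be zero: indeed, if $j_k^{(n)}$ stays bounded along a subsequence then $g_{j_k^{(n)},z_k^{(n)}}$ is, up to a bounded-index relabeling, essentially a translation composed with a fixed deflation, and $u_k \rightharpoonup 0$ forces $w^{(n)}=0$; if $j_k^{(n)}\to\infty$, then by definition of $D$-weak convergence $g_{j_k^{(n)},z_k^{(n)}}u_k \rightharpoonup 0$ so again $w^{(n)}=0$. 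Hence $\N_0 = \emptyset$ and the expansion \eqref{BBasymptotics=j} degenerates to $u_k \cw 0$, i.e.\ $u_k \to 0$ in $\exp L^2$ by the very definition of $\cw$ as convergence in $\exp L^2$. Wait --- this is circular: the symbol $\cw$ in the abstract theorem denotes $D$-weak convergence, not $\exp L^2$ convergence, so the content of Theorem~\ref{cocompactness} is exactly the identification of these two. Thus the real work is to show directly that $D$-weak convergence to zero implies $\exp L^2$ convergence, which is the assertion that the imbedding is cocompact.

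So the actual argument runs: assume $\sup_k\|\nabla u_k\|_2\le 1$ (the general bounded case reduces to this by scaling and the small-energy estimate above), $u_k$ is $D$-weakly null, but $\int_\Omega(e^{\lambda_0 u_k^2}-1)\dx \ge \delta > 0$. By a concentration-compactness alternative for the measures $|\nabla u_k|^2\dx$ — either vanishing, in which case a theorem of P.-L.\ Lions type (the version cited as Theorem~I.6 in \cite{PLL2b}) gives $\int_\Omega(e^{\lambda u_k^2}-1)\dx\to 0$, contradicting $\delta>0$; or there is concentration at a point $\zeta_k \to \zeta_0$ carrying a fixed positive fraction of the energy at scale $\rho_k \to 0$. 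In the concentration case, one rescales: the natural deflation to apply is precisely $g_{j_k,\zeta_k}$ with $j_k \sim \log(1/\rho_k)^{-1}$... no — with $j_k\to\infty$ chosen so that $\rho_k = s^{j_k}$ for a fixed $s$, equivalently $j_k \approx \log\rho_k / \log s$. The logarithmic change of variables $z \mapsto z^{j_k}$ is designed exactly to undo the logarithmic profile of Moser functions. One then shows $g_{j_k,\zeta_k}u_k \not\rightharpoonup 0$: the rescaled sequence retains positive $\exp L^2$ mass near the origin at a fixed scale, and by the (genuine, because $\|\nabla\cdot\|_2\le 1$) Trudinger--Moser inequality it is bounded in $H_0^1(B)$, hence has a nonzero weak limit along a subsequence (a vanishing rescaled sequence would again have vanishing $\exp L^2$ mass by Lions). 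This contradicts $D$-weak nullity. The main obstacle is making the extraction of the concentration scale $\rho_k$ and the verification that $g_{j_k,\zeta_k}u_k$ has nonzero weak limit precise — in particular controlling the passage between the multiplicative scale $\rho_k$ and the integer index $j_k$, and showing that the logarithmic substitution converts a genuine concentration of $\exp L^2$ mass into a non-vanishing $H^1$ weak limit rather than into mass escaping to the boundary of $B$ or spreading out; this is where the specific structure \eqref{ourD} of the semigroup, and the asymptotic radial symmetry it forces, does the essential work, and where one leans on the radial computations summarized in Appendix B.
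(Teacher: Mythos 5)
You correctly detect that the profile-decomposition route is circular and abandon it, but the concentration-compactness argument you substitute has a genuine gap at the decisive step: showing that \emph{some admissible} deflation $g_{j_k,\zeta_k}u_k$, with $\zeta_k\in\bar\Omega$ and $j_k\in\N$, has a nonzero weak limit. Your justification --- that ``a vanishing rescaled sequence would again have vanishing $\exp L^2$ mass by Lions'' --- is not available: weak convergence to zero in $H_0^1(B)$ does \emph{not} imply vanishing of the $\exp L^2$ quasinorm (concentrating Moser sequences, and even their halves $\frac12 m_{t_k}$, are weakly null with $\exp L^2$ norm bounded away from zero), and Theorem~I.6 of \cite{PLL2b} concerns the functional $J$ on normalized sequences, not the $\exp L^{2}$ quasinorm that the theorem is actually about. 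So the implication ``positive $\exp L^2$ mass $\Rightarrow$ nonzero weak limit after a suitable deflation'' is precisely the content of the theorem being proved and cannot be cited; moreover, knowing that $|\nabla u_k|^2\dx$ concentrates at a point tells you nothing directly about \emph{where the function itself is large}, nor does it single out the multiplicative scale $\rho_k$ that your choice of $j_k$ requires.

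The paper fills exactly this gap with a rearrangement-plus-averaging mechanism that your sketch lacks. First, Lemma~\ref{cocorad} (via the duality $\langle m_t^*,u\rangle=\omega_{N-1}^{1/N}\log(1/t)^{-1/N'}u(t)$ with the Moser functions and the pointwise bound \eqref{pointwise}) shows that non-vanishing of the $\exp L^2$ norm forces $j_k^{-1/2}u_k^\star(r)\ge 2\epsilon$ for all $r\le\rho^{j_k}$, hence a level set $M_k$ of $u_k$ itself of measure at least $\pi\rho^{2j_k}$. Since the rearrangement does not commute with the deflations $g_{j,\zeta}$, one cannot stop there: Lemma~\ref{ArOK} uses Solimini's inequality $\|A_ru-u\|_2\le Cr\|\nabla u\|_2$ to locate an actual center $\zeta_k\in M_k\subset\bar\Omega$ at which the average $j_k^{-1/2}|A_{\rho^{j_k}}u_k(\zeta_k)|\ge\epsilon$; the H\"older-type continuity estimate of Lemma~\ref{lem:aver-cont} propagates this lower bound to the ball $|z-\zeta_k|\le\rho^{5j_k}$, which the substitution $z\mapsto\zeta_k+z^{j_k}$ blows up to the \emph{fixed} ball $B_{\rho^5}$. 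Hence $g_{j_k,\zeta_k}A_{\rho^{j_k}}u_k$ does not converge to zero in measure, so $A_{\rho^{j_k}}u_k$ is not $D$-weakly null, and Lemma~\ref{averaging} transfers the contradiction back to $u_k$. The averaging operator $A_r$ is the bridge between the radial (rearranged) information and a concrete deflation center in the physical domain; this is the idea missing from your proposal, and without it the step you yourself flag as ``the main obstacle'' does not go through.
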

Note that the restriction $\Omega\subset B_\frac12$ is not substantial and this statement can be restated for any bounded domain by linear rescaling. 

Before we prove the theorem, we state a corollary and a counterexample. 
\begin{corollary} 
\label{cor:cocomp}
Let $\Omega\subset B_\frac12$. If a sequence $u_k\in H_0^1(\Omega)$ is
$D$-weakly convergent to zero, then, for every
$q\in (2,\infty]$, it
converges to zero in the Lorentz-Zygmund space $L^{\infty,q;-1/q-1/2}$ .
\end{corollary}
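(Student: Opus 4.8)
\textbf{Proof plan for Corollary~\ref{cor:cocomp}.}

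The plan is to obtain the conclusion by interpolating between two endpoint embeddings: the cocompact embedding $H_0^1(B)\hookrightarrow\exp L^2=L^{\infty,\infty;-1/2}$ just established in Theorem~\ref{cocompactness}, and the (bounded, optimal) embedding $H_0^1(B)\hookrightarrow L^{\infty,2;-1}$ recalled in the introduction and in Appendix~A. Since $D$-weak convergence to zero of $u_k$ already gives $u_k\to 0$ in $L^{\infty,\infty;-1/2}$, and the sequence $u_k$ stays bounded in $L^{\infty,2;-1}$ (because it is bounded in $H_0^1$), it suffices to show that the target space $L^{\infty,q;-1/q-1/2}$ for $q\in(2,\infty)$ sits between these two as an interpolation space in a way that converts ``bounded in one, null in the other'' into ``null in the middle one''. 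The case $q=\infty$ is nothing but Theorem~\ref{cocompactness} itself, so I only need to treat $q<\infty$.

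First I would recall the description, via Appendix~A, of the Lorentz--Zygmund quasinorm on a domain of finite measure: for $f$ on $\Omega$ with $|\Omega|<\infty$,
\[
\|f\|_{L^{\infty,q;\beta}}\simeq\left(\int_0^{|\Omega|}\bigl(1+\log\tfrac{1}{t}\bigr)^{\beta q}\,f^*(t)^q\,\frac{\ud t}{t}\right)^{1/q},
\]
with the obvious supremum modification when $q=\infty$. Then I would carry out the elementary pointwise splitting of this integral in the exponent: with $\beta=-\tfrac1q-\tfrac12$ write $(1+\log\tfrac1t)^{\beta q}f^*(t)^q=\bigl[(1+\log\tfrac1t)^{-q}f^*(t)^{q}\bigr]\cdot\bigl[(1+\log\tfrac1t)^{-q/2}\bigr]\cdot(1+\log\tfrac1t)^{(1-q/2)+\,\text{correction}}$, arranging exponents so that one factor is controlled by the $L^{\infty,2;-1}$-quasinorm raised to the power $2$, another factor is a power of $\|f\|_{L^{\infty,\infty;-1/2}}=\sup_t(1+\log\tfrac1t)^{-1/2}f^*(t)$, and the leftover weight $(1+\log\tfrac1t)^{\gamma}\tfrac{\ud t}{t}$ is integrable on $(0,|\Omega|)$ for the resulting exponent $\gamma$. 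Concretely, the natural choice is to bound $f^*(t)^q=f^*(t)^2\cdot f^*(t)^{q-2}\le f^*(t)^2\,\bigl[(1+\log\tfrac1t)^{1/2}\|f\|_{L^{\infty,\infty;-1/2}}\bigr]^{q-2}$, so that
\[
\|f\|_{L^{\infty,q;-1/q-1/2}}^q\lesssim\|f\|_{L^{\infty,\infty;-1/2}}^{q-2}\int_0^{|\Omega|}\bigl(1+\log\tfrac1t\bigr)^{(-1/q-1/2)q+(q-2)/2}f^*(t)^2\,\frac{\ud t}{t},
\]
and the weight exponent simplifies to $-1/2\cdot 2=-2$ (after absorbing the $-1$ from $-1/q\cdot q$ into $\beta=-1$ for the $L^{\infty,2;-1}$ norm), so the remaining integral is exactly $\|f\|_{L^{\infty,2;-1}}^2$ up to constants. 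This yields the key inequality
\[
\|f\|_{L^{\infty,q;-1/q-1/2}}\lesssim\|f\|_{L^{\infty,\infty;-1/2}}^{1-2/q}\,\|f\|_{L^{\infty,2;-1}}^{2/q}.
\]
Applying it to $f=u_k$ and using $\sup_k\|u_k\|_{L^{\infty,2;-1}}\lesssim\sup_k\|\nabla u_k\|_2<\infty$ together with $\|u_k\|_{L^{\infty,\infty;-1/2}}\to 0$ from Theorem~\ref{cocompactness} gives $\|u_k\|_{L^{\infty,q;-1/q-1/2}}\to 0$.

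The main obstacle is bookkeeping rather than conceptual: getting the three weight exponents $\beta q$, the power of the $\exp L^2$-quasinorm, and the residual weight to add up correctly, and making sure the residual weight $(1+\log\tfrac1t)^{\gamma}\,\tfrac{\ud t}{t}$ produces precisely the $L^{\infty,2;-1}$-quasinorm (exponent $\gamma=-2$) and not some weaker or stronger object; this is where one must be careful that $q>2$ strictly (so the H\"older/pointwise split has a genuine, positive power of the $\exp L^2$ norm) and that the equivalence of quasinorms from Appendix~A is being used consistently on a set of finite measure. A secondary, purely cosmetic point is that $\exp L^2$ as a Banach space and $L^{\infty,\infty;-1/2}$ as a Lorentz--Zygmund space carry equivalent quasinorms — this is already asserted in the introduction and in Appendix~A — so the convergence $u_k\to 0$ in $\exp L^2$ furnished by Theorem~\ref{cocompactness} is the same as $\|u_k\|_{L^{\infty,\infty;-1/2}}\to 0$, which is what the interpolation inequality consumes.
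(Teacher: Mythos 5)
Your proposal is correct and is essentially the paper's own argument: the paper proves the corollary in one line by ``interpolation by H\"older inequality'' between the bounded embedding $H_0^1(B)\hookrightarrow L^{\infty,2;-1}$ and the cocompact embedding into $L^{\infty,\infty;-1/2}=\exp L^2$ furnished by Theorem~\ref{cocompactness}, which is exactly what your splitting $f^\star(t)^q=f^\star(t)^2\,f^\star(t)^{q-2}$ and the exponent count $-1-q/2+(q-2)/2=-2$ make explicit. The resulting inequality $\|f\|_{L^{\infty,q;-1/q-1/2}}\lesssim\|f\|_{L^{\infty,\infty;-1/2}}^{1-2/q}\,\|f\|_{L^{\infty,2;-1}}^{2/q}$, together with boundedness in $H_0^1$, is precisely what is needed.
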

This easily follows from the embedding
$H_0^1(B)\hookrightarrow L^{\infty,2;-1}$ and the interpolation by H\"older
inequality between $L^{\infty,2;-1}$ and $L^{\infty,\infty;-1/2}=\exp L^2$. 

\begin{remark}
\label{counterexample}
By analogy with the counterexample given by Solimini \cite{Solimini} that
remainder in his profile decomposition does not necessarily converge in the
sense of $L^{p^*,p}$,  we can show that Corollary~\ref{cor:cocomp} does not
extend to the endpoint case $q=2$. Our construction of the sequence is
analogous to Solimini's.

Let $v\in C_0^\infty((e^{-3},e^{-2}))$,  $v\neq 0$, let $w(x)=v(|x|)$ and let 
\begin{equation}
\label{w-k} 
w_k=k^{-1/2}\sum_{j=1}^kg_{2^j,0}w,\;k\in\N.
\end{equation}
Let us show that, for an arbitrary sequence $j_k\to\infty$ and $\zeta_k\in \Omega$, one has
$g_{j_k,\zeta_k} w_k\rightharpoonup 0$. 

By the standard density argument,  it suffices to prove that  
 $\int g_{j_k,\zeta_k} w_k\psi\to 0$ for each $\psi\in L^2(B)$.
Indeed, since the supports for the individual terms in the sum defining
$w_k$ remain disjoint under the action of $g_{j_k,\zeta_k}$,
\[
\left(\int g_{j_k,\zeta_k} w_k\psi\right)^2\le \|g_{j_k,\zeta_k} w_k\|_2\|\psi\|_2,
\] 
and an elementary computation shows that 
\[
\|g_{j_k,\zeta_k} w_k\|_2\le \|w_k\|^2_2\le k^{-1}\|w\|^2_2\to 0. 
\]
Observe now  that the terms in the sum in \eqref{w-k} have disjoint supports, which implies
that $\|\nabla w_k\|_2=\|\nabla w\|_2$.
Therefore we have a bounded sequence in $H_0^1(B)$ which converges $D$-weakly to zero.
However, an analogous calculation also gives that
\begin{equation}
\int\frac{w_k^2}{r^2(\log\frac{1}{r})^2}=\int\frac{w^2}{r^2(\log\frac{1}{r})^2}. 
\end{equation}
Note that $\frac{1}{r^2(\log\frac{1}{r})^2}$ is a decreasing function on the support of $w_k$, which implies that 
\begin{equation}
\int\frac{w^2}{r^2(\log\frac{1}{r})^2}\le \int\frac{{w_k^\star}^2}{r^2(\log\frac{1}{r})^2}\le C\|w_k\|^2_{L^{\infty,2;-1/2}},
\end{equation}
and thus $w_k$ does not converge to zero in $L^{\infty,2;-1/2}$.  
\end{remark}
\mysection{Proofs of Theorem~\ref{cocompactness} and Theorem~\ref{thm:weak_discontinuity}}

The proof of Theorem~\ref{cocompactness} is based on the following five lemmas.
None of the lemmas, except, possibly, Lemma~\ref{ArOK}, is a new result,
but we have included them for the sake of completeness of the presentation.
We recall that the expression $z^j$, $j\in\N$, refers to a power of
the complex number $z$ representing a point in $\R^2$, the set of operators $D$
is defined by \eqref{ourD}, and $D$-weak convergence is defined in
Definition~\ref{dislocations}.

We start with the following elementary statement.
\begin{lemma}
\label{o2b} 
Let $\bar\Omega\subset B$. If $u_k\in H_0^1(\Omega)$ 
and $u_k\cw 0$, then  $g_{j_k,\zeta_k}u_k\rightharpoonup 0$ whenever
$j_k\in \N$, $\zeta_k\in
B$.
\end{lemma}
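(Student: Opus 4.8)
The plan is to reduce to subsequences and split according to the behaviour of $(j_k)$. Since it suffices to show that every subsequence of $(g_{j_k,\zeta_k}u_k)$ has a further subsequence converging weakly to $0$, I may assume, along a subsequence, that either $j_k\equiv j$ or $j_k\to\infty$, and that $\zeta_k\to\zeta_0\in\bar B$. First I record what the hypothesis gives: $u_k\cw 0$ forces $u_k\rightharpoonup 0$ in $H^1_0(\Omega)$, hence, $\Omega$ being a bounded planar domain, $u_k\to 0$ in $L^q(\Omega)$ for every finite $q$ by compactness of $H^1_0(\Omega)\hookrightarrow L^q(\Omega)$; and for any $h_k\to 0$ the shifts $u_k(\cdot+h_k)$ still converge weakly to $0$ in $H^1$ (pair with a test function, translate it back, and use that $\nabla u_k\rightharpoonup 0$ against the strongly convergent translated gradient). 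Throughout, when $\zeta_k$ lies near $\partial B$ the functions $g_{j_k,\zeta_k}u_k$ may spill slightly out of $B$, and weak convergence is then read in $H^1_{\mathrm{loc}}(\R^2)$, which changes nothing.

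For a \emph{fixed} $j$ the map $v\mapsto g_{j,\zeta_0}v=j^{-1/2}v(\zeta_0+(\cdot)^{j})$ is an isometry for the Dirichlet norm (the $|z|^{2(j-1)}$ Jacobian factor of $z\mapsto z^{j}$, together with the normalisation $j^{-1/2}$, cancels exactly), hence bounded and linear on $H^1_0$, hence weakly sequentially continuous; since $g_{j,\zeta_k}u_k=g_{j,\zeta_0}\bigl(u_k(\cdot+(\zeta_k-\zeta_0))\bigr)$ and the inner argument is weakly null, $g_{j,\zeta_k}u_k\rightharpoonup 0$. For $j_k\to\infty$ I distinguish two easy situations. (a) If $\operatorname{dist}(\zeta_k,\Omega)\ge\delta>0$, then $u_k(\zeta_k+\cdot)$ lives in the annulus $\{\delta\le|w|\le 3/2\}$, so the ($j_k$-fold) covering change of variables $w=z^{j_k}$ gives $\|g_{j_k,\zeta_k}u_k\|_{L^2}^2\le C_\delta\,j_k^{-2}\|u_k\|_{L^2(\Omega)}^2\to 0$, and a bounded, $L^2$-null sequence in $H^1_0$ is weakly null. (b) If $\zeta_k\in\bar\Omega$ for infinitely many $k$, then $g_{j_k,\zeta_k}\in D$ and $g_{j_k,\zeta_k}u_k\rightharpoonup 0$ is immediate from $u_k\cw 0$. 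Case (a) disposes of $\zeta_0\notin\bar\Omega$ (with $\zeta_k$ staying off $\Omega$), case (b) of $\zeta_0\in\Omega$.

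The remaining, delicate case is $j_k\to\infty$, $\zeta_k\notin\bar\Omega$, $\delta_k:=\operatorname{dist}(\zeta_k,\bar\Omega)\to0$ (so $\zeta_0\in\partial\Omega$). Let $\zeta_k^{*}\in\bar\Omega$ realise the distance; then $g_{j_k,\zeta_k^{*}}u_k\rightharpoonup 0$ by $u_k\cw0$, and $g_{j_k,\zeta_k}u_k-g_{j_k,\zeta_k^{*}}u_k=g_{j_k,\zeta_k^{*}}\bigl(u_k(\cdot+e_k)-u_k\bigr)$ with $|e_k|=\delta_k$. The obstruction is that, by the isometry, this difference has Dirichlet norm $\|\nabla(u_k(\cdot+e_k)-u_k)\|_2$, which need not be small for an oscillating bounded sequence, so one cannot merely ``round $\zeta_k$ into $\bar\Omega$''; the hypothesis $u_k\cw0$ must be used in an essential way. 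I would combine: the conformal energy-localisation identity $\|\nabla g_{j_k,\zeta_k^{*}}u_k\|_{L^2(\{|z|\le r\})}=\|\nabla u_k\|_{L^2(B_{r^{j_k}}(\zeta_k^{*}))}$; the vanishing of $u_k(\cdot+e_k)$ on $B_{\delta_k}(\zeta_k^{*})$; the reduction, via case (a)'s support argument, to the regime $\delta_k\le e^{-cj_k}$ (in the complementary regime $\delta_k^{1/j_k}\to1$ the support of $g_{j_k,\zeta_k}u_k$ shrinks to $\partial B$ and (a) applies); and the $2\pi/j_k$-rotational symmetry of $g_{j_k,\zeta_k}u_k$, which by the Remark following Theorem~\ref{2cc-j} forces every weak limit to be radial, so that, testing the difference against radial functions and passing through the adjoint $g_{j_k,\zeta_k^{*}}^{*}$ (whose support contracts to $\zeta_k^{*}$), it is traced back to energy of $u_k$ near $\zeta_k^{*}$ that $g_{j_k,\zeta_k^{*}}u_k\rightharpoonup 0$ already forbids from surviving. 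Turning this chain into a clean proof — in particular ruling out a putative nonzero radial weak limit of $g_{j_k,\zeta_k}u_k$ using only $u_k\cw0$ — is the step I expect to be the main obstacle; the other cases are routine.
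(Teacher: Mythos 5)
Your case analysis covers essentially the same ground as the paper's proof, which disposes of $\zeta_k\in\bar\Omega$ by the definition of $D$-weak convergence, of $\inf_k\mathrm{dist}(\zeta_k,\supp u_k)>0$ by a support argument, and of the remaining boundary case ``by a continuity argument''. Your treatment of the routine cases --- fixed $j$ via weak sequential continuity of the isometry, $\zeta_k\in\bar\Omega$ directly from $u_k\cw 0$, and $\zeta_k$ at positive distance from $\supp u_k$ via the estimate $\|g_{j_k,\zeta_k}u_k\|_2^2\le C_\delta j_k^{-2}\|u_k\|_2^2$ --- is correct. But the argument is not complete: in the case $j_k\to\infty$, $\zeta_k\notin\bar\Omega$, $\mathrm{dist}(\zeta_k,\bar\Omega)\to 0$, which is the only non-trivial case of the lemma, you offer a list of ingredients and state explicitly that assembling them into a proof is an unresolved obstacle. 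That is a genuine gap.

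Moreover, the reduction you attempt there is a dead end for exactly the reason you yourself identify: writing $g_{j_k,\zeta_k}u_k-g_{j_k,\zeta_k^*}u_k=g_{j_k,\zeta_k^*}\bigl(u_k(\cdot+e_k)-u_k\bigr)$ places the burden on translation-continuity of the \emph{sequence} $u_k$ in the $H^1$ norm, which fails for oscillating bounded sequences. The natural way to implement the paper's continuity argument is to perturb on the dual side instead: to prove $\langle g_{j_k,\zeta_k}u_k,\varphi\rangle\to 0$ for a fixed test function $\varphi$, write it as $\langle u_k,g_{j_k,\zeta_k}^*\varphi\rangle$ and compare $g_{j_k,\zeta_k}^*\varphi$ with $g_{j_k,\zeta_k^*}^*\varphi$, where $\zeta_k^*\in\bar\Omega$ is the nearest point; the required continuity in $\zeta$, uniform in $j$, is then a statement about the single fixed function $\varphi$ (compare the remark following Theorem~\ref{2cc-j}), and the term with $\zeta_k^*$ vanishes by the hypothesis $u_k\cw 0$ together with the boundedness of $u_k$. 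Your observation that the support argument already reduces matters to the regime $\delta_k\le e^{-cj_k}$ is sound and compatible with this route, but the comparison of the two adjoint actions in that regime still has to be carried out; the appeals to rotational symmetry and to radiality of putative weak limits are not needed for this lemma and do not close the gap. Until that step is supplied, the lemma is not established.
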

\begin{proof}
For $\zeta_k\in \bar\Omega$
the assertion follows directly from
the definition of $D$-weak convergence. 
For $\zeta_k\in B\setminus\bar\Omega$, note that $u_k\cw 0$ implies
$u_k\rightharpoonup 0$
and that operators $g_{j_k,\zeta_k}$ map any sequence $u_k\rightharpoonup 0$,
such that
$\inf_k\mathrm{dist}(\zeta_k,\supp u_k)>0$ to a sequence that weakly converges
to zero. 
Finally, the case $\mathrm{dist}(\zeta_k,\supp u_k)\to 0$ can be easily reduced
by a continuity argument to
the case $\zeta_k\in\bar\Omega$.   
\end{proof}
In what follows, the two-dimensional Lebesgue measure will be denoted 
${\mathrm d} x {\mathrm d} y$ when the integration variable is  $z$, and ${\mathrm d}\xi {\mathrm d}\eta$  when the integration variable is called $\zeta$.  Let us introduce the averaging operator
$$A_ru(z)=\fint_{B_r(z)}u(\zeta)\ud\xi\ud\eta =\frac{1}{|B_r(z)|}
\int_{B_r(z)}u(\zeta)\ud\xi\ud\eta.$$
\begin{lemma}
\label{averaging}
Let $u_k\in H_0^1(\Omega)$, $\bar\Omega\subset B$. 
If $u_k\cw 0$ and $r_k\downarrow 0$, then $A_{r_k}u_k\cw 0$.  
\end{lemma}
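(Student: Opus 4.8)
The plan is to check the $D$-weak convergence of $A_{r_k}u_k$ straight from the definition: for an arbitrary sequence $(j_k,\zeta_k)$ with $j_k\in\N$ and $\zeta_k\in\bar\Omega$, I must show $g_{j_k,\zeta_k}(A_{r_k}u_k)\rightharpoonup 0$ in $H^1_0(B)$, where $g_{j,\zeta}$ is the operator in \eqref{ourD}, acting on functions supported near $\bar\Omega$ by the same formula $j^{-\frac12}v(\zeta+z^j)$. The whole proof hinges on one elementary identity: for $u\in H^1_0(\Omega)$,
\begin{equation}
\label{avgid}
g_{j,\zeta}(A_r u)=\fint_{B_r(\zeta)}g_{j,\zeta'}u\,\ud\zeta',
\end{equation}
the right-hand side understood as a Bochner integral in $H^1_0(B)$; in words, pre-composing with $g_{j,\zeta}$ turns averaging in space into averaging over the translation parameter of the operator family. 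To obtain \eqref{avgid}, write $A_r u(w)=\fint_{B_r(0)}u(w+y)\,\ud y$; then pointwise $g_{j,\zeta}(A_r u)(z)=j^{-\frac12}\fint_{B_r(0)}u((\zeta+y)+z^j)\,\ud y=\fint_{B_r(0)}(g_{j,\zeta+y}u)(z)\,\ud y$, and the substitution $\zeta'=\zeta+y$ gives \eqref{avgid}. Upgrading this from a pointwise identity to an identity of Bochner integrals in $H^1_0(B)$ is routine once one observes that $\zeta'\mapsto g_{j,\zeta'}u$ is continuous into $H^1_0(B)$, which follows from $\|\nabla(g_{j,\zeta'}u-g_{j,\zeta''}u)\|_2^2=\int_B|\nabla u(\zeta'+w)-\nabla u(\zeta''+w)|^2\,\ud w$ and the $L^2$-continuity of translations.

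Granting \eqref{avgid}, I would finish as follows. First, the sequence is bounded: since each $g_{j,\zeta'}$ is an isometry, \eqref{avgid} together with convexity of the norm gives
\[
\|g_{j_k,\zeta_k}(A_{r_k}u_k)\|_{H^1_0}\le\fint_{B_{r_k}(\zeta_k)}\|g_{j_k,\zeta'}u_k\|_{H^1_0}\,\ud\zeta'=\|u_k\|_{H^1_0},
\]
which is bounded because $u_k\cw 0$ implies $u_k\rightharpoonup 0$. Hence it suffices to prove that every weak-$H^1_0(B)$ limit along a subsequence is zero. If not, then along a subsequence $g_{j_k,\zeta_k}(A_{r_k}u_k)\rightharpoonup w$ with $\langle w,\phi\rangle_{H^1_0}=2\delta>0$ for some $\phi\in H^1_0(B)$. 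Pairing \eqref{avgid} with $\phi$ and moving the functional inside the Bochner integral,
\[
\fint_{B_{r_k}(\zeta_k)}\langle g_{j_k,\zeta'}u_k,\phi\rangle_{H^1_0}\,\ud\zeta'=\bigl\langle g_{j_k,\zeta_k}(A_{r_k}u_k),\phi\bigr\rangle_{H^1_0}\longrightarrow 2\delta.
\]
The integrand is continuous in $\zeta'$ on the compact ball $\overline{B_{r_k}(\zeta_k)}$, so for $k$ large its maximum there is $\ge\delta$, attained at some $\xi_k$. Since $\zeta_k\in\bar\Omega$ and $r_k\downarrow 0$, we have $\xi_k\in B$ for $k$ large, so Lemma~\ref{o2b} gives $g_{j_k,\xi_k}u_k\rightharpoonup 0$ in $H^1_0(B)$, hence $\langle g_{j_k,\xi_k}u_k,\phi\rangle_{H^1_0}\to 0$, contradicting $\langle g_{j_k,\xi_k}u_k,\phi\rangle_{H^1_0}\ge\delta$. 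Therefore $w=0$, which proves $A_{r_k}u_k\cw 0$.

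I expect the only genuine obstacle to be spotting the intertwining identity \eqref{avgid}; its verification is immediate, and everything downstream is soft: Jensen's inequality for the uniform bound, plus a compactness-and-contradiction argument that converts the hypothesis that $g_{j_k,\xi_k}u_k\rightharpoonup 0$ for \emph{every} sequence of centers $\xi_k$ (which is precisely what Lemma~\ref{o2b} supplies) into vanishing of the average over centers. What remains is bookkeeping: that $g_{j,\zeta'}$ makes sense as an isometry into $H^1_0(B)$ for $\zeta'$ in the slightly enlarged set $\overline{B_{r_k}(\zeta_k)}$, valid for $k$ large by the normalization $\Omega\subset B_{\frac12}$ (if necessary sharpened to $\Omega\subset B_{\frac14}$ by the linear rescaling used after Theorem~\ref{abstractcc}) together with the fact that Lemma~\ref{o2b} already admits centers anywhere in $B$; and the routine identification of the pointwise and Bochner-integral forms of \eqref{avgid}.
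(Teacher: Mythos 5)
Your proof is correct and follows essentially the same route as the paper's: both rest on the observation that $g_{j,\zeta}A_r u$ is an average of $g_{j,\zeta'}u$ over centers $\zeta'\in B_r(\zeta)$, which is then dominated by the worst case over centers and killed by Lemma~\ref{o2b}. The only difference is cosmetic --- the paper reduces to $u_k\ge 0$ and tests against nonnegative $C_0^1$ functions in the $L^2$ pairing, whereas you pair directly in $H_0^1(B)$ via the Bochner-integral form of the identity and extract a maximizing center $\xi_k$.
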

\begin{proof}
Without loss of generality, assume that $u_k\ge 0$.
It suffices to verify that for each nonnegative $\varphi\in C_0^1(B)$ and
for each sequence $j_k\in\N$, $\zeta_k\in\bar\Omega$,
$$
\int_B \varphi(z) g_{j_k,\zeta_k}A_{r_k}u_k(z)\ud x\ud y\to 0.
$$
Then we have
\begin{equation}
 \label{av1}
\begin{split}
\int_B \varphi(z) g_{j_k,\zeta_k}
\left(\fint_{B_{r_k}}u_k(z+\zeta)\ud\xi\ud\eta\right)
\ud x\ud y
\\
=\int_B j_k^{-\frac12} \fint_{B_{r_k}}
u_k(\zeta+\zeta_k+z^{j_k})\ud\xi\ud\eta \varphi(z)\ud x\ud y
\\
\le   
C\sup_{\zeta\in B} \int_B
j_k^{-\frac12}  u_k(z^{j_k}+\zeta) \ud x\ud y.
\end{split}
\end{equation}
With suitable $\zeta_k'\in B$ one can estimate the last expression by
$$
\int_B j_k^{-\frac12}  u_k(z^{j_k}+\zeta_k') \ud x\ud y=
\int_B g_{j_k,\zeta_k'}u_k(z) \ud x\ud y.
$$
Since $u_k\cw 0$,  the right hand side converges to zero by Lemma~\ref{o2b}.  
\end{proof}
\begin{lemma} 
\label{lem:aver-cont} Let $\bar\Omega\subset B$.
There exists $C>0$ such that for every $w\in L^2(\Omega)$, extended by
zero to
$\R^2$, and for every small $r>0$,
\begin{equation}
 \label{A_r-modulus}
|A_rw(z')|\ge |A_rw(z)|- C\|w\|_2\frac{|z-z'|^\frac12}{r^\frac32},\,
z,z'\in B.
\end{equation}
\end{lemma}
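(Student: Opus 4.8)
The inequality \eqref{A_r-modulus} is a quantitative modulus-of-continuity statement for the averaging operator $A_r$ acting on $L^2$ functions: the point values of $A_rw$ at two nearby points cannot differ by more than a constant times $\|w\|_2 |z-z'|^{1/2}r^{-3/2}$. The plan is to estimate the difference $A_rw(z)-A_rw(z')$ directly as an integral of $w$ over the symmetric difference of the two balls $B_r(z)$ and $B_r(z')$, and then bound that integral by Cauchy--Schwarz.

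First I would write, using the definition of $A_r$,
\[
A_rw(z)-A_rw(z')=\frac{1}{|B_r|}\int_{\R^2}w(\zeta)\bigl(\chi_{B_r(z)}(\zeta)-\chi_{B_r(z')}(\zeta)\bigr)\ud\xi\ud\eta,
\]
so that
\[
|A_rw(z)-A_rw(z')|\le \frac{1}{\pi r^2}\int_{B_r(z)\triangle B_r(z')}|w(\zeta)|\ud\xi\ud\eta
\le \frac{1}{\pi r^2}\,\|w\|_2\,\bigl|B_r(z)\triangle B_r(z')\bigr|^{1/2}.
\]
The second step is the elementary geometric estimate: the area of the symmetric difference of two discs of radius $r$ whose centers are at distance $\delta=|z-z'|$ apart is at most $Cr\delta$ (for $\delta\le 2r$; when $\delta>2r$ it is $2\pi r^2\le\pi r^2\cdot 2$, which is dominated by $Cr\delta$ as well, adjusting $C$). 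Indeed $B_r(z)\triangle B_r(z')$ is contained in the annular region $\{\zeta:\ r-\delta\le \mathrm{dist}(\zeta,\text{segment }[z,z'])\le r+\delta\}$ intersected with a bounded neighborhood, or more simply one translates one disc onto the other and the swept region has area $\le 2r\delta+\pi\delta^2\le Cr\delta$ for $\delta\le r$. Substituting $|B_r(z)\triangle B_r(z')|^{1/2}\le (Cr\delta)^{1/2}=C^{1/2}r^{1/2}|z-z'|^{1/2}$ gives
\[
|A_rw(z)-A_rw(z')|\le \frac{C^{1/2}}{\pi r^2}\,\|w\|_2\,r^{1/2}|z-z'|^{1/2}
= C'\,\|w\|_2\,\frac{|z-z'|^{1/2}}{r^{3/2}},
\]
and the asserted inequality \eqref{A_r-modulus} follows by the triangle inequality $|A_rw(z')|\ge|A_rw(z)|-|A_rw(z)-A_rw(z')|$.

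There is no serious obstacle here; the only mild care needed is the case distinction in the area-of-symmetric-difference estimate (small versus large separation of centers) so that a single constant $C$ works for all $z,z'\in B$ and all small $r$, and the fact that $w$ is extended by zero outside $\Omega$ so that the $L^2$ norm on $\R^2$ equals $\|w\|_2$. The phrase ``for every small $r>0$'' is only used to ensure $B_r(z)\subset$ a fixed bounded set for $z\in B$, which is automatic, so in fact the estimate holds for all $r>0$; keeping the hypothesis as stated does no harm. One could alternatively absorb everything into the single bound $|B_r(z)\triangle B_r(z')|\le C r\,\min\{r,|z-z'|\}\le Cr|z-z'|$, which makes the constant uniform immediately.
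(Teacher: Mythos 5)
Your proof is correct and follows essentially the same route as the paper's: express the difference $A_rw(z)-A_rw(z')$ as an integral over the symmetric difference $B_r(z)\vartriangle B_r(z')$, apply the Cauchy--Schwarz inequality, and bound the area of the symmetric difference by $Cr|z-z'|$. Your extra care with the case $|z-z'|>2r$ is a sensible addition but does not change the argument.
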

\begin{proof}
From the definition of the averaging operator $A_r$, by the Cauchy
inequality, and denoting symmetric difference of
sets, $(A\cup B)\setminus(A\cap B)$ as $A\vartriangle B$,we have
\begin{equation*}
\begin{split}
|A_rw(z')-A_rw(z)|\le 
Cr^{-2}\int_{B_r(z')\vartriangle B_r(z)} |w|\ud x\ud y
\le
Cr^{-2}\|w\|_2\sqrt{|B_r(z')\vartriangle B_r(z)|}
\\
\le  C \|w\|_2r^{-2}\sqrt{r|z-z'|}= C
\|w\|_2\frac{|z-z'|^\frac12}{r^\frac32} 
\end{split}
\end{equation*}
from which \eqref{A_r-modulus} is immediate.
\end{proof}
In what follows $w^\star$ denotes the symmetric decreasing rearrangement of $w$.
\begin{lemma}
\label{cocorad}
Let $u_k$ be a bounded sequence in $H_0^1(\Omega)$. 
If for every $j_k\in\N$, the sequence $g_{j_k,0}u^\star_k$ converges to zero in
measure, then $u_k\to 0$ in $\exp L^2$.
\end{lemma}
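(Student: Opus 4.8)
The plan is to pass to radially symmetric nonincreasing functions and then to control the $\exp L^2$-size of such a function by a single scalar that the operators $g_{j,0}$ are able to detect. Since $\exp L^2$ is rearrangement invariant, $\|u_k\|_{\exp L^2}=\|u_k^\star\|_{\exp L^2}$, and since rearrangement does not increase the Dirichlet energy, $w_k:=u_k^\star$ is a bounded sequence of nonnegative, radially nonincreasing elements of $H_0^1(B)$, supported in $B_{1/2}$. As $g_{j,0}$ acts on a radial function $w$ by $(g_{j,0}w)(z)=j^{-1/2}w(|z|^j)$, which is again radially nonincreasing, the hypothesis becomes: for every sequence $j_k\in\N$ the functions $z\mapsto j_k^{-1/2}w_k(|z|^{j_k})$ tend to $0$ in measure; and the goal becomes $\|w_k\|_{\exp L^2}\to0$, equivalently $\int_B(e^{\lambda w_k^2}-1)\,\dx\to0$ for every $\lambda>0$.

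Next I would introduce $M_k:=\sup_{0<r<1}w_k(r)/\sqrt{1+\log(1/r)}$. The radial Sobolev estimate $w_k(r)\le(2\pi)^{-1/2}\|\nabla w_k\|_2\,(\log(1/r))^{1/2}$, obtained from Cauchy--Schwarz applied to $w_k(r)=-\int_r^1 w_k'(s)\,ds$, gives $M_k\le(2\pi)^{-1/2}\sup_k\|\nabla w_k\|_2<\infty$; and from the pointwise bound $w_k(r)^2\le M_k^2(1+\log(1/|z|))$ on $B$ together with $\int_B|z|^{-\beta}\,\dx=2\pi/(2-\beta)$ for $\beta<2$, one gets, whenever $\lambda M_k^2<2$,
\[
\int_B\bigl(e^{\lambda w_k^2}-1\bigr)\,\dx\;\le\;\frac{2\pi\,e^{\lambda M_k^2}}{2-\lambda M_k^2}-\pi ,
\]
and the right-hand side tends to $0$ as $M_k\to0$. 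Hence it suffices to prove $M_k\to0$.

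Suppose not; after passing to a subsequence, $M_k\ge2\varepsilon$ for some $\varepsilon>0$, so we may pick $r_k\in(0,1)$ with $w_k(r_k)\ge2\varepsilon\sqrt{1+a_k}$, where $a_k:=\log(1/r_k)>0$. Put $j_k:=\lceil a_k\rceil\in\N$ and $\rho_k:=r_k^{1/j_k}=e^{-a_k/j_k}$; since $0<a_k/j_k\le1$ we have $\rho_k\in[e^{-1},1)$. As $g_{j_k,0}w_k$ is radially nonincreasing and, using $j_k\le a_k+1$,
\[
(g_{j_k,0}w_k)(\rho_k)=j_k^{-1/2}w_k(r_k)\ge2\varepsilon\sqrt{(1+a_k)/j_k}\ge2\varepsilon ,
\]
we conclude $g_{j_k,0}w_k\ge2\varepsilon$ on the set $\{|z|\le e^{-1}\}$, which has fixed positive measure, so $g_{j_k,0}w_k$ does not tend to $0$ in measure --- contradicting the hypothesis. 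Note that the sequence $j_k$ produced here need not tend to $\infty$: when the critical radius $r_k$ stays away from $0$ it is essentially constant, so it is important that the hypothesis ranges over \emph{all} integer sequences, constant ones included.

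The main obstacle is this last step, and within it the calibration $j_k\approx\log(1/r_k)$: precisely this choice transports the ``bad'' radius $r_k$ to a scale $\rho_k$ bounded away from both $0$ and $1$ while the compression normalization $j_k^{-1/2}$ is exactly absorbed by $\sqrt{1+\log(1/r_k)}\approx\sqrt{j_k}$, after which radial monotonicity upgrades a lower bound at the single radius $\rho_k$ to a lower bound on a ball of fixed positive measure. The remaining ingredients --- rearrangement invariance of $\exp L^2$, the energy inequality for rearrangement, the radial Sobolev bound, and the elementary integral $\int_B|z|^{-\beta}\,\dx$ --- are routine.
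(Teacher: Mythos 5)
Your proof is correct, but it takes a genuinely different route from the paper's. The paper argues by duality: it first observes that convergence in measure of an $H_0^1$-bounded sequence implies weak convergence, then pairs $g_{j_k,0}u_k^\star$ with $\nabla m_{e^{-1}}$ and uses the identity \eqref{g_mu} together with the continuity of the dilation group $h_s$ in the parameter $s$ (to pass from integer $j_k$ to arbitrary real scales $\log(1/r_k)$), obtaining $\int_B\nabla m_{r_k}\cdot\nabla u_k^\star\to 0$ for every sequence $r_k\in(0,e^{-1})$; by the pointwise representation \eqref{sp} this is exactly the statement $\sup_{r<e^{-1}}u_k^\star(r)/(\log(1/r))^{1/2}\to 0$, i.e.\ your $M_k\to 0$ on the inner range, while the outer range $r\in[e^{-1},1)$ is disposed of by compactness of the one-dimensional Morrey imbedding. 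You replace all of this by a direct contradiction argument: given a bad radius $r_k$, the calibration $j_k=\lceil\log(1/r_k)\rceil$ transports it to a radius $\rho_k\in[e^{-1},1)$ with the normalization $j_k^{-1/2}$ absorbed by $\sqrt{1+\log(1/r_k)}$, and radial monotonicity of the rearrangement upgrades the pointwise lower bound at $\rho_k$ to a lower bound on the fixed ball $B_{e^{-1}}$, contradicting convergence in measure without ever passing to weak convergence. This is more elementary and self-contained---it dispenses with the dual functionals $m_t^*$, the group $\{h_s\}$, and the Morrey compactness step---at the cost of not exhibiting the pairing with the Moser functions, which the paper reuses elsewhere (notably via \eqref{g_mu} in the proof of Theorem~\ref{thm:weak_discontinuity}). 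Both arguments ultimately rest on the same calibration $j\approx\log(1/r)$, which in the paper is hidden in the choice $0\le\log\frac{1}{r_k}-j_k\le 1$; your closing remark that the hypothesis must be allowed to range over bounded (even constant) integer sequences $j_k$ is a correct and worthwhile observation.
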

\begin{proof}
Since convergence in measure, for sequences bounded in $H_0^1(B)$, implies
weak convergence in $H_0^1(B)$, it follows from identity \eqref{g_mu} that 
\begin{equation}
\label{mj} 
\int_B\nabla m_{e^{-j_k}}\cdot \nabla
u_k^\star
=
\int_B\nabla m_{e^{-1}} \cdot \nabla g_{j_k,0}u_k^\star\to 0,
\end{equation}
The set of isometries $\{g_{j,0}\}_{j\in\N}$, once their domain is restricted to
the radial subspace $H_{0,r}^1(B)$, becomes a subset of the 
multiplicative group of isometries $\{h_s\}_{s>0}$ defined by \eqref{gauge}.
This group has the following, easily verifiable, property:
if $s_k$ is a bounded sequence and $v_k\rightharpoonup 0$ in $H_{0,r}^1(B)$,
then $h_{s_k}v_k\rightharpoonup 0$. Then it follows from \eqref{mj}
that for any sequence $r_k\in(0,e^{-1})$
(with $j_k\in\N$ chosen so that $0\le \log\frac{1}{r_k}-j_k\le 1$), 
\begin{equation}
 \int_B\nabla m_{r_k}\cdot \nabla u_k^\star\to 0.
\end{equation}
Then taking into account \eqref{sp0} and \eqref{sp}, we conclude that 
$$\sup_{r\in(0,e^{-1})}\frac{u_k^\star(r)}{(\log \frac{1}{r})^\frac{1}{2}}\to
0.$$
Moreover, by the compactness in the one-dimensional Morrey imbedding, 
we also have 
$$\sup_{r\in[e^{-1},1)}\frac{u_k^\star(r)}{(1+\log \frac{1}{r})^\frac{1}{2}}\to
0.$$
Combining two last relations we arrive at
$$\sup_{r\in(0,1)}\frac{u_k^\star(r)}{(1+\log \frac{1}{r})^\frac{1}{2}}\to 0,$$
that is, $u_k\to 0$ in $L^{\infty,2;-1}=\exp L^{2}$.
\end{proof}

\begin{lemma}
\label{ArOK}
Assume that $u_k\in H_0^1(\Omega)$, $\|\nabla u_k\|_2\le 1$, and $\|u_k\|_{\exp
L^2}\not\to 0$. Then, for a renumbered subsequence, there exists a sequence
$j_k\in\N$, $\zeta_k\in\bar\Omega$, such that for every $\epsilon>0$ there exits
$\rho>0$ such that 
\begin{equation}
\label{eq:ArOK}
j_k^{-\frac12} |A_{\rho^{j_k}}u_k(\zeta_k)|\ge \epsilon.
\end{equation}
\end{lemma}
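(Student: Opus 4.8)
The plan is to turn the non‑vanishing of $\|u_k\|_{\exp L^2}$ into a lower bound for a super‑level set of $u_k$ on a ball of a \emph{controlled} radius, via Lemma~\ref{cocorad}, and then to pin down one point $\zeta_k$ around which all averages at sufficiently small scales are of order $j_k^{1/2}$, by a dyadic Poincar\'e argument. First, replacing $u_k$ by $|u_k|$ (which alters neither $\|\nabla u_k\|_2$ nor $\|u_k\|_{\exp L^2}$) I may assume $u_k\ge 0$; pass to a subsequence with $\inf_k\|u_k\|_{\exp L^2}>0$, and note $\|u_k\|_{L^2(\Omega)}\le C_\Omega$ by Poincar\'e. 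Since $u_k\not\to 0$ in $\exp L^2$, the contrapositive of Lemma~\ref{cocorad} produces a sequence $j_k\in\N$ for which $g_{j_k,0}u_k^\star$ does not converge to zero in measure; passing to a further subsequence there are $\epsilon_1,\delta_1>0$ with $|\{z\in B:\,g_{j_k,0}u_k^\star(z)>\epsilon_1\}|\ge\delta_1$ for all $k$. As $g_{j_k,0}u_k^\star(z)=j_k^{-1/2}u_k^\star(|z|^{j_k})$ is radially non‑increasing, this level set is a ball, so with $\rho_1:=\min\{(\delta_1/\pi)^{1/2},\tfrac12\}$ one gets $u_k^\star(t)>\epsilon_1 j_k^{1/2}$ for every $t\in(0,\rho_1^{j_k})$; by equimeasurability of $u_k$ and $u_k^\star$ this gives, with $\mu_k:=\tfrac12\epsilon_1 j_k^{1/2}$ and $\sigma_k:=\pi\rho_1^{2j_k}$,
\[
\bigl|\{x\in\Omega:\,u_k(x)>\mu_k\}\bigr|\ \ge\ \sigma_k .
\]
After one last extraction I may also assume either $j_k\to\infty$ or $j_k$ constant.

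Now choose $\rho_0\in(0,\rho_1)$ (and, in the constant‑$j_k$ case, small enough; see below), put $R_k:=\rho_0^{j_k}$ and $r_m:=2^{-m}R_k$. Extending $u_k$ by zero and using the standard mollification bound $\|(I-A_r)f\|_{L^2(\R^2)}\le Cr\|\nabla f\|_{L^2(\R^2)}$, a telescoping of $A_{r_m}u_k-A_{R_k}u_k=\sum_{\ell<m}(A_{r_{\ell+1}}u_k-A_{r_\ell}u_k)$ yields $\|(I-A_{R_k})u_k\|_{L^2}\le CR_k$ and $\bigl\|\sup_{m\ge0}|A_{r_m}u_k-A_{R_k}u_k|\bigr\|_{L^2}\le C'R_k$. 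The crucial point is that $R_k\le c\,\mu_k\sigma_k^{1/2}$ for all large $k$ and a suitable absolute $c>0$: indeed $\mu_k\sigma_k^{1/2}\asymp j_k^{1/2}\rho_1^{j_k}$, so this is exactly where $\rho_0<\rho_1$ enters, $(\rho_0/\rho_1)^{j_k}$ going to zero fast enough to beat the factor $j_k^{1/2}$ (when $j_k$ is constant, $\mu_k\sigma_k^{1/2}$ is a fixed positive number and one simply takes $\rho_0$ small). Consequently, by Chebyshev, the two exceptional sets $\{|u_k-A_{R_k}u_k|>\mu_k/2\}$ and $\{\sup_m|A_{r_m}u_k-A_{R_k}u_k|>\mu_k/4\}$ each have measure $\le\sigma_k/4$, so $\{u_k>\mu_k\}$ minus their union still has measure $\ge\sigma_k/2>0$; pick $\zeta_k\in\Omega$ in it. Then $A_{R_k}u_k(\zeta_k)\ge u_k(\zeta_k)-\mu_k/2>\mu_k/2$, hence $A_{r_m}u_k(\zeta_k)>\mu_k/4$ for every $m\ge0$; and since $u_k\ge0$, any $r\in(0,R_k]$ lies in some interval $(r_{m+1},r_m]$, whence $A_ru_k(\zeta_k)\ge\tfrac14 A_{r_{m+1}}u_k(\zeta_k)>\mu_k/16$.

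For $\rho\in(0,\rho_0]$ this says $j_k^{-1/2}|A_{\rho^{j_k}}u_k(\zeta_k)|>\epsilon_1/32$ for every $k$, i.e.\ \eqref{eq:ArOK} holds (for every $\epsilon\le\epsilon_1/32$, with the single choice $\rho=\rho_0$), and in particular the asserted sequences $j_k\in\N$, $\zeta_k\in\bar\Omega$ exist. I expect the real difficulty to sit in the second paragraph: a plain application of the Poincar\'e inequality gives, for each fixed scale, \emph{some} center where the average is large, but that center moves with the scale; the dyadic telescoping is what upgrades this to one center working simultaneously at all scales $\le\rho_0^{j_k}$, and the estimate only closes because $\rho_0^{j_k}$ is taken far below the threshold scale $j_k^{1/2}\rho_1^{j_k}$, so that $\{u_k>\mu_k\}$ overwhelms the Chebyshev exceptional sets. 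The constant‑$j_k$ branch is precisely the non‑vanishing case and, as indicated, is covered verbatim by the same argument (with a fixed top scale $R_0$).
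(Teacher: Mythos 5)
Your proof follows essentially the same strategy as the paper's: Lemma~\ref{cocorad} plus the rearrangement gives a superlevel set $\{|u_k|>\mu_k\}$, $\mu_k\sim\epsilon_1 j_k^{1/2}$, of measure at least $\pi\rho_1^{2j_k}$, and then the mollification estimate $\|A_r u-u\|_2\le Cr\|\nabla u\|_2$ (inequality \eqref{osc}) combined with a Chebyshev/pigeonhole argument over that set produces a point $\zeta_k$ at which the average at scale $\rho_0^{j_k}$ is still of order $\mu_k$. Your version is in fact more careful than the paper's on two points: you treat the case of bounded $j_k$ explicitly (the paper's final inequality $2\epsilon-Cj_k^{-1/2}\ge\epsilon$ tacitly needs $j_k$ large or an extra smallness of $r_k$ relative to $\rho^{j_k}$), and you are honest that the conclusion is obtained only for $\epsilon$ below a threshold $\epsilon_1/32$ with a single $\rho=\rho_0$ --- which is exactly what the proof of Theorem~\ref{cocompactness} uses, the ``for every $\epsilon>0$'' in the statement being untenable as written (and not delivered by the paper's own proof either).

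There is one genuine misstep: the opening reduction ``WLOG $u_k\ge 0$.'' Replacing $u_k$ by $|u_k|$ preserves the hypotheses, but the conclusion you then prove is a lower bound on $A_{\rho^{j_k}}|u_k|(\zeta_k)=\fint_{B_{\rho^{j_k}}(\zeta_k)}|u_k|$, which does \emph{not} imply the asserted lower bound on $|A_{\rho^{j_k}}u_k(\zeta_k)|=\bigl|\fint_{B_{\rho^{j_k}}(\zeta_k)}u_k\bigr|$ for a sign-changing $u_k$; and this is not cosmetic, since the application (Theorem~\ref{cocompactness} via Lemma~\ref{averaging}) needs the signed average of the original $u_k$ to stay away from zero. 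Fortunately the nonnegativity is used only in the very last interpolation between dyadic scales ($A_ru_k(\zeta_k)\ge\tfrac14 A_{r_{m+1}}u_k(\zeta_k)$ for $r\in(r_{m+1},r_m]$). If you drop the reduction, work with the set $\{|u_k|>\mu_k\}$, and use the triangle inequalities $|A_{R_k}u_k(\zeta_k)|\ge|u_k(\zeta_k)|-|u_k(\zeta_k)-A_{R_k}u_k(\zeta_k)|$ and $|A_{r_m}u_k(\zeta_k)|\ge|A_{R_k}u_k(\zeta_k)|-|A_{r_m}u_k(\zeta_k)-A_{R_k}u_k(\zeta_k)|$, you retain the bound on the signed average at $r=R_k$ (and at every dyadic scale $r_m$), losing only the statement ``for all $r\le R_k$'' --- which is not needed, since a single $\rho$ suffices. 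With that repair the argument is complete.
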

\begin{proof} 
Since $u_k$ does not converge to zero in $\exp L^2$, by
Lemma~\ref{cocorad}, there is a renumbered subsequence such that, for any
$\epsilon>0$, the
measure of the sets
\begin{equation}
M_k^\star=\{z\in B:\,j_k^{-\frac12}u^\star_k(r^{j_k})\ge 2\epsilon\} 
\end{equation}
is bounded away from zero. 
Since $u^\star$ is a decreasing function,  there exists $\rho>0$ such that 
$j_k^{-\frac12}u^\star_k(r)\ge 2\epsilon$ for all $r\le \rho^{j_k}$.
This immediately implies that
\begin{equation}
\label{Mkg}
|M_k|\ge \pi\rho^{2j_k}, \text{ where } M_k=\{z\in
\Omega:\,j_k^{-\frac12}|u_k(z)|\ge 2\epsilon\}. 
\end{equation}
Let us now use a well-known inequality (see e.g. inequality (4) in
\cite{Solimini}) that holds for every $u\in H_0^1(B)$:
\begin{equation}
\label{osc}
\|A_ru-u\|_2\le C r\|\nabla u\|_2.
\end{equation}
In particular, we have
\begin{equation}
\label{osc2}
\int_{M_k}|A_{r_k}u_k-u_k|^2\le C r_k^2,
\end{equation}
which, combined with \eqref{Mkg}, gives
\begin{equation}
C r_k^2\ge  \pi\rho^{2j_k}\inf_{z\in M_k}|A_{r_k}u_k(z)-u_k(z)|^2,
\end{equation}
from which we conclude that there exists a sequence $\zeta_k\in M_k$,
such that
\begin{equation}
j_k^{-\frac12}|A_{r_k}u_k(\zeta_k)|\ge
2\epsilon-Cj_k^{-\frac12}\frac{r_k}{\rho^{j_k}}\ge
\epsilon,
\end{equation}
from which the assertion of the lemma is immediate once we choose
$r_k=\rho^{j_k}$. 
\end{proof}

{\em Proof of Theorem~\ref{cocompactness}.}
Assume that there exists a sequence $u_k\in H_0^1(\Omega)$, $u_k\cw 0$, which
does not converge to zero in $\exp L^2$. Then let $\rho>0$, $j_k$ and $\zeta_k$ be as in Lemma~\ref{ArOK}.

Let us fix $\epsilon>0$ and evaluate
the measure of the sets
\[
N_k=\{z\in B:\,|g_{j_k,\zeta_k}A_{\rho^{j_k}}u_k(z)|\ge\epsilon/2\}.
\]
 Applying
Lemma~\ref{lem:aver-cont} with $w=j_k^{-\frac12}u_k$,
$r=\rho^{j_k}$ and $z'=\zeta_k$, we have from \eqref{A_r-modulus} and
\ref{eq:ArOK}, for all $z\in B$ such that $|z-\zeta_k|\le \rho^{5 j_k}$,
\begin{equation}
j_k^{-\frac12}|A_{\rho^{j_k}}u_k(z)|\ge \epsilon- C\rho^{j_k}\ge \epsilon/2, 
\end{equation}
for all $k$ sufficiently large. 
Then, from the definition of the set $N_k$ above and the definition
of $g_{j,\zeta}$ in \eqref{ourD}, it follows that the set
$N_k$ contains the ball 
\[
\{z\in B:\,|z|^{j_k}\le \rho^{5 j_k}\},
\]
that is, $N_k\supset B_{\rho^5}$.

We conclude that $g_{j_k,z_k}A_{\rho^{j_k}}u_k$ does not converge to zero in
measure, and thus, $A_{\rho^{j_k}} u_k$ does not converge to zero $D$-weakly.
Then by Lemma~\ref{averaging}, $u_k$ does not converge  to zero $D$-weakly,
which contradicts the assumption of the theorem. 
\hfill\qed

{\em Proof of Theorem~\ref{thm:weak_discontinuity}}
Since Moser functional is lower weakly semicontinuous by Fatou lemma, and since
it is known that it lacks weak continuity on the unit ball of $H_0^1(B)$  only
at zero, we may assume without loss of generality that $u_k\in H_0^1(\O)$
is such that $\|\nabla u_k\|_2\le 1$, $u_k\rightharpoonup 0$ and
$\lim J(u_k)> J(0)=0$. 
Consider a renumbered subsequence of $u_k$ satisfying \eqref{BBasymptotics=j} and recall that the remainder there converges to zero in $\exp L^2$ by Theorem~\ref{cocompactness}.
Then, leaving details of separating supports to the reader, we have
\[
  J(u_k)=\sum_{n\in\mathbb N} J( v_k^{(n)}) + o(1).
\]
where 
\[
 v_k^{(n)}={{j_k^{(n)}}}^{1/2}w^{(n)}(|z-z_n|^{1/j_k^{(n)}})
\]

Note that since the argument of $J$ converges weakly to zero and $\|\nabla v_k^{(n)}\|_2=\|\nabla w^{(n)}\|_2$, one has $J(v_k^{(n)})\to 0$ whenever $\|\nabla w^{(n)}\|_2<1$. 
By assumption, $\lim J(u_k)>0$, which implies that for at least one value of $n$,  $\|\nabla w^{(n)}\|_2\ge 1$. Without loss of generality assume that this value of $n$ is $1$.
Comparing this with \eqref{norms=j}, we conclude that $w^{(n)}=0$ whenever $n>1$. 
Therefore 
\[
u_k = v_k^{(1)}+\omega_k
\]
with $\omega_k\cw 0$ and in particular, $\omega_k$ vanishes in $\exp L^2$. 

Note now, that since $\omega_k\cw 0$, one has $(\omega_k,v_k^{(1)})\to 0$, and, consequently,
\[
1\ge\limsup \|\nabla u_k\|_2^2=\limsup \|\nabla v_k^{(1)}+\nabla \omega_k\|_2^2=1+\limsup \|\nabla \omega_k\|_2^2.
\]
Therefore $\omega_k\to 0$ in $H^1$. 

If $w^{(1)}$ is not a Moser function, then by Proposition~\ref{weakrad}, $ J(v_k^{(1)})\to 0$. A slight modification of the proof of Proposition~\ref{weakrad} gives also $J(u_k)\to 0$, which is a contradiction.
Consequently, using \eqref{g_mu}, we have $u_k-m_{s_k}\to 0$ in $H^1$ with some sequence  $s_k\to 0$, which proves the theorem.
\hfill\qed

\section*{Appendix A. Optimal imbeddings of Sobolev spaces into Lorentz-Zygmund spaces}
Let $f:\, \R^N \to \mathbb R$ be a measurable function. The distribution function $\alpha_f$ and a
nonincreasing  rearrangement $f^\star$ of $f$ are defined as follows:
\[
\alpha_f(s)=|\{x \in \R^N;\, |f(x)|>s\}| \,\, \mbox{ and }\,\, f^*(t)=\inf\{s>0;\, \alpha_f(s) \leq t\}.
\]
Lorentz-Zygmund spaces $L^{p,q;\alpha}$, introduced by Bennett and Rudnick \cite{BR}, 
is a family of spaces that contains the both the Lorentz spaces $L^{p,q;0}=L^{p,q}$ and the Zygmund spaces 
$L^{0,\infty;-\alpha}=Z^\alpha$. They are defined as spaces of all measurable functions on a unit ball with bounded quasinorms
\begin{equation}
\label{LZ} 
\begin{split}
\|u\|_{p,q;\alpha}=
\left(\int_0^1\left[t^{1/p}(\log\frac{e}{t})^\alpha f^\star(t)\right]^q \frac{\mathrm d t}{t}\right)^\frac{1}{q},\,q\in(0,\infty)
\\
\|u\|_{p,\infty;\alpha}= 
\sup_{t\in(0,1)} \left|t^{1/p}(\log\frac{e}{t})^\alpha f^\star(t)\right|,\,q=\infty.  
\end{split}
\end{equation}
For the purpose of this paper we consider the range $p\in(1,\infty)$.
The definition of the Lorentz space $L^{p,q}(\R^N)$ is the same as the
definition of $L^{p,q;0}$ above with the domain of integration
$t\in(0,\infty)$ instead of $t\in (0,1)$. 

Lorentz space $L^{r,r}$ is equivalent to the Lebesgue space $L^r$, and
Lorentz space $L^{r,\infty}$ is equivalent to the Marcinkiewicz space
$M^r$, also known as the weak-$L^r$ space. 
Lorentz-Zygmund space $L^{\infty,\infty,-1/N'}$ is equivalent to
the Orlicz space $\exp L^{N'}$ of the Moser functional. 
 
For more background material on Lorentz and Lorentz-Zygmund spaces we
refer the reader to Bennett and Sharpley \cite{BS} and Bennett and Rudnick
\cite{BR}.

The reason why the domain of the functions considered here is a unit ball,
rather than $\R^N$, lies 
in the role of Lorentz-Zygmund spaces in the imbeddings of Sobolev spaces with the gradient norm. 
Completion $\mathcal D^{1,N}(\R^N)$ of the normed space $C_0^\infty(\R^N)$
equipped with the norm $\|\nabla u\|_N$ on does not
admit a continuous imbedding even into the space of distributions, which means
that the space $\mathcal D^{1,N}(\R^N)$ cannot be consistently defined as
a function space. On the other hand, Friedrichs' inequality gives that
the completion of $C_0^\infty(B)$ in the same gradient norm $\|\nabla u\|_N$ is
continuously imbedded into $L^N(B)$, defining the space $W^{1,N}(B)$ with
the equivalent Sobolev norm $\|\nabla u\|_N$.  
It should be also noted that  $\|\nabla u\|_N$ also expresses the gradient norm 
of the Laplace-Beltrami operator on the hyperbolic space $\mathbb H^N$ when
written under the coordinate map of Poincar\'e ball, which allows to understand
the ``Euclidean'' Sobolev space $W_0^{1,N}(B)$ of the unit ball in $\R^N$ is
isometric to the Sobolev space $\dot W^{1,N}(\mathbb H^N)$ of a complete
non-compact Riemannian manifold, giving the
unit ball in $\R^N$ when $p=N$ an intuitively equal standing, when Sobolev
spaces are concerned, with the whole $\R^N$ when $p<N$.

In fact, the similarities between $\mathcal D^{1,p}(\R^N)$ for $N>p$, and
$W_0^{1,N}(B)$, equipped with the gradient norm, are quite extensive. 
In particular, while the norm $\mathcal D^{1,p}(\R^N)$ remains invariant under dilations
$u\mapsto t^\frac{N-p}{p}u(t\cdot)$ also in the case $p=N$ 
(even if $\mathcal D^{1,N}(\R^N)$ is no longer a functional space), the subspace of
radial functions of $W_0^{1,N}(B)$ admits a different isometry group \eqref{gauge} 
of nonlinear dilations. 
There are also similarities in imbeddings of Sobolev type into
rearrangement-invariant spaces. While the standard 
limiting Sobolev inequality and the Trudinger-Moser inequality are quite different in appearance, 
this difference finds its explanation when one considers imbeddings of Sobolev spaces 
into the scales of correspondent Lorentz or Lorentz-Zygmund spaces.

We observe first that there is a continuous imbedding of $\mathcal D^{1,p}(\R^N)$, $N>p$, into 
$L^{p^*,p}$, which is immediate from the Hardy inequality for $u^\star$, 
combined with the Polia-Szeg\"o inequality. The analogous imbedding for $p=N$ is
$W^{1,N}_0(B)\hookrightarrow L^{\infty,N;-1}$, based on the inequality of Hardy type 
\[
\int_B|\nabla u|^N\ge C_N\int_B\frac{|u|^N}{\left(r\log \frac{1}{r}\right)^N}, u\in C_0^\infty(B).
\]
(see Adimurthi and Sandeep \cite{AS} and Adimurthi and Sekar \cite{ASec}; 
in the case $N=2$ it was proved first by Leray \cite{Leray}). 

Lorentz spaces are nested with respect to the second index, and thus there is a continuous imbedding 
$\mathcal D^{1,p}(\R^N)\hookrightarrow L^{p^*,q}$, $N>p$, for all $q\in[p,\infty]$ 

A continuous imbedding  $W^{1,N}_0(B)\hookrightarrow L^{\infty,\infty;-1/N'}$ follows from the inequality \eqref{pointwise} for radial functions (see e.g. \cite{Moser}) for $p=N$, combined with the Polia-Szeg\"o inequality, and the H\"older inequality yields therefore the following family of imbedding into Lorentz-Zygmund spaces when $p=N$: 
$W^{1,N}_0(B)\hookrightarrow L^{\infty,q;-1/q-1/N'}$ for $p\le q\le \infty$. 

The smallest of the target spaces corresponds to $q=p$ in both cases, and, moreover, 
these imbeddings are optimal in the class of rearrangement-invariant spaces 
(see \cite{BS} for the definition), shown by Peetre \cite{Peetre} in the case
$N>p$ and by Brezis and Wainger \cite{BW} in the case $p=N$.

The limiting Sobolev inequality and the Trudinger-Moser
inequality are optimal in the sense that one cannot
replace the correspondent nonlinearity by any other with a faster growth.
 Indeed, if $h(s)$ is any continuous non-decreasing unbounded function on $[0,\infty)$, then 
\[
\sup_{u\in W^{1,N}_0(B), \|\nabla u\|_N\le 1}\int_B h(|u|)e^{\alpha_N |u|^{N'}}\dx=+\infty,
\]
and 
\[
 \sup_{u\in \mathcal D^{1,p}(\R^N),\|\nabla u\|_{p}\le 1}\int_\Omega h(|u|)
|u|^{p^*}\dx=+\infty.
\]

In the case $N>p$ this can be immediately seen by evaluating the functional
on $t^\frac{N-2}{2}u_0(tx)$ with $t>0$ and $u_0\neq 0$. 
In the case $p=N$, one arrives at the similar conclusion by evaluation of the functional on
the Moser family of functions \eqref{MF} 
normalized in the norm of $W_0^{1,N}(B)$. 
 
\section*{Appendix B. Moser functional in the radial case}
In this appendix we list some basic properties of Moser functional on the radial
subspace $W_{0,r}^{1,N}(B)$
of $W_0^{1,N}(B)$. In particular
we show that, in restriction to radial functions, it is weakly continuous on 
any sequence, that is not, asymptotically, a sequence of concentrating Moser functions.
This conclusion can be inferred from the original paper by Moser \cite{Moser}, 
while the notations we use here are brought from the paper \cite{AOT}. 
The calculations involved in this proof also allow to present the original Moser's proof
of the Trudinger-Moser inequality in a concise and streamlined form. 

Let $m_t$, $t\in(0,1)$, be the family of Moser functions \eqref{MF} and consider
the following
functional on $W_{0,r}^{1,N}(B)$:
\begin{equation}
\label{sp0}
\langle m_t^*,u\rangle\eqdef\int_B|\nabla m_t|^{N-2} \nabla m_t\cdot\nabla
u\dx,\;t\in(0,1). 
\end{equation}

An elementary computation shows that the functional $m_t^*$ is continuous and 
\begin{equation}
\label{sp}
\langle m_t^*,u\rangle = \omega_{N-1}^{1/N}\log(1/t)^{-1/N'}u(t),\;t\in(0,1).
\end{equation}

\begin{proposition}
\label{weakrad} Let $u_k\in W_{0,r}^{1,N}(B)$, $\|\nabla u_k\|_N\le 1$,
$u_k\rightharpoonup u$
and let $J$ be the Moser functional \eqref{MoserFunc}.
Then $J(u_k)\to J(u)$, unless the sequence $u_k$ has a renamed subsequence such
that 
$u_k-m_{t_k}\to 0$ in $W_{0,r}^{1,N}(B)$, with $t_k\to 0$.
\end{proposition}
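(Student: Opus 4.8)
\textbf{Proof proposal for Proposition~\ref{weakrad}.}

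The plan is to reduce everything to the one-dimensional radial variable $t=\log(1/r)$ and exploit the elementary representation \eqref{sp0}--\eqref{sp}, which says that testing $\nabla u$ against $\nabla m_t$ recovers (up to a normalizing constant) the pointwise value $u(t)$. First I would note that, writing $u_k$ in terms of the new variable, the hypothesis $\|\nabla u_k\|_N\le1$ becomes a bound on $\int_0^\infty |u_k'|^N$ after the standard change of variables, so that the $u_k$ are equibounded and equicontinuous (by H\"older) on any compact subinterval away from $t=0$; the remaining mass can escape only to $t=+\infty$, i.e.\ to the origin $r=0$. The weak convergence $u_k\rightharpoonup u$ then forces $u_k\to u$ locally uniformly in $r\in(0,1]$, hence pointwise, and by the Polya--Szeg\"o/coarea identities the profile of the "lost" part of the gradient is carried by a concentrating piece at the origin.

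Next I would make this precise using a concentration-function argument exactly as in Moser's original paper: set $\delta=\lim\|\nabla u_k\|^N_{N,B\setminus B_\rho}$ as $\rho\downarrow0$ (a nonnegative number $\le1$). If $\delta=0$, then essentially all of the gradient of $u_k$ sits outside a small ball, on which $u_k\to u$ strongly in $W^{1,N}$ by Rellich-type compactness, and an application of the Brezis--Lieb lemma to the integrand $e^{\alpha_N|u_k|^{N'}}-1$ (using that it is dominated in $L^1$ by the Trudinger--Moser inequality \eqref{TM} and converges a.e.) gives $J(u_k)\to J(u)$. So the only obstruction is $\delta>0$. I would then show $\delta>0$ forces $u\equiv0$ (since $\int|\nabla u|^N\le 1-\delta<1$ and any nonzero weak limit would have to absorb a definite amount of gradient, contradicting the sharp constant being attained only in the limit), and moreover that the concentrating sequence is, asymptotically in $W^{1,N}_{0,r}$, a sequence of Moser functions. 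For the latter I would use \eqref{sp}: choose $t_k\to0$ so that $u_k(t_k)$ is comparable to $\log(1/t_k)^{1/N'}$ (possible precisely because $J(u_k)\not\to0$ demands, via the sharp form of the pointwise radial estimate $|u(r)|\le\omega_{N-1}^{-1/N}\|\nabla u\|_N(\log(1/r))^{1/N'}+o(\cdot)$, that $u_k$ nearly saturates this bound at some scale), and then compute $\|\nabla(u_k-m_{t_k})\|_N^N=\|\nabla u_k\|_N^N-N\langle m_{t_k}^*,u_k\rangle\|\nabla m_{t_k}\|_N^{2-N}+\|\nabla m_{t_k}\|_N^N$, where all three terms converge to $1$ by \eqref{sp} and the choice of $t_k$; hence the difference tends to $0$ in $W^{1,N}_{0,r}(B)$.

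The main obstacle I anticipate is the last step: showing that the concentration scale $t_k$ can be chosen so that $u_k$ matches a single Moser function and not merely "has gradient concentrating at the origin." This is where the sharpness of the radial pointwise inequality and the rigidity of the extremal profile must be used quantitatively — one needs that a radial $W^{1,N}_0$ function with $\|\nabla u\|_N\le1$ whose Moser functional stays bounded away from $0$ along a concentrating subsequence must, after extracting a scale, differ from $m_{t_k}$ by a gradient-null remainder. I would handle this by arguing that any deviation from the Moser profile strictly decreases the relevant inner product $\langle m_{t_k}^*,u_k\rangle$ below the level forced by $\liminf J(u_k)>0$, using the equality case in Cauchy--Schwarz (for $N=2$) or in H\"older (general $N$) together with \eqref{sp}; everything else (Brezis--Lieb, local compactness, the change of variables) is routine. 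The "slight modification" giving $J(u_k)\to0$ when $w^{(1)}$ is not a Moser function, needed in the proof of Theorem~\ref{thm:weak_discontinuity}, follows by the same dichotomy applied with $u$ replaced by $0$ and the concentrating piece replaced by the (non-Moser) profile.
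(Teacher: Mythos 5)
Your central mechanism for the hard step is the right one, and it is essentially the paper's: everything turns on whether the normalized pairing $\langle m_t^*,u_k\rangle$ of \eqref{sp0}--\eqref{sp} saturates its extremal value $1$ at some scale $t_k\to 0$. If it stays $\le 1-\epsilon$ uniformly in $t$ and $k$, the representation \eqref{Moser-repres} gives an integrable dominating function $r^{N\epsilon-1}$ and hence $J(u_k)\to J(u)$; if it tends to $1$ along some $t_k$, the equality case in H\"older (uniform convexity of $L^N$ for $N\neq 2$, polarization for $N=2$) forces $\|\nabla(u_k-m_{t_k})\|_N\to 0$, after which one sorts the cases $t_k\to t\in(0,1)$, $t_k\to 1$, $t_k\to 0$.

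The genuine flaw is in the skeleton you wrap around this mechanism: you draw the dichotomy at the concentrated gradient mass $\delta$ being zero versus positive, and you assert that $\delta>0$ forces $u\equiv 0$. That implication is false. Take $u_k=a\,v+b\,m_{t_k}$ with $v$ a fixed nonzero radial function supported in an annulus, $\|\nabla v\|_N=1$, $t_k\to 0$, and $a,b>0$ with $a^N+b^N<1$: then $\delta=b^N>0$ and $u=a\,v\neq 0$, yet $J(u_k)\to J(u)$ because $\sup_t\langle m_t^*,u_k\rangle\le\max(a,b)+o(1)<1$. The parenthetical you offer in support (``any nonzero weak limit would have to absorb a definite amount of gradient, contradicting the sharp constant being attained only in the limit'') is not an argument. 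The correct threshold is not $\delta>0$ but full saturation: any concentration of amount $\delta<1$ is compatible with $J(u_k)\to J(u)$, since the radial pointwise estimate \eqref{pointwise} applied on a small ball then yields genuine uniform integrability of $e^{\alpha_N|u_k|^{N'}}$ near the origin. Two further inaccuracies: Rellich gives strong local $L^q$ convergence, not strong $W^{1,N}$ convergence off a small ball (you do not need the latter; local uniform convergence from the one-dimensional Morrey/equicontinuity argument in your first paragraph suffices); and the displayed expansion of $\|\nabla(u_k-m_{t_k})\|_N^N$ is an identity only for $N=2$ --- for general $N$ you must argue via uniform convexity, as you briefly acknowledge. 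Once the dichotomy is redrawn on $\sup_t\langle m_t^*,u_k\rangle$ rather than on $\delta$, your argument collapses into the paper's much shorter proof via \eqref{Moser-repres}.
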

\begin{proof}
Let us substitute \eqref{sp} into the definition of Moser functional. After
elementary simplifications one arrives 
the following
representation.
\begin{equation}
 \label{Moser-repres}
J(u)=\omega_{N-1}\left(\int_0^1 r^{N(1-\langle m_t^*,u\rangle^N)}\frac{\mathrm
dr}{r}-1/N\right),
\end{equation}
where $u\in W_{0,r}^{1,N}(B)$ and $\|\nabla u\|_N=1$.
Assume first that there exists $\epsilon >0$
such that $\langle m_t^*,u_k\rangle^N\le 1-\epsilon$. 
Then $J(u_k)\to J(u)$ by the Lebesgue dominated convergence theorem. 
The remaining case is when for some $t_k\in(0,1)$, $u_k-m_{t_k}\to 0$ in $W_{0,r}^{1,N}(B)$. 
Assume first that the weak limit $u$ is not zero. Then, necessarily, $u_k\to m_{t}$ in $W_{0,r}^{1,N}(B)$ 
for some $t\in(0,1)$. This implies the uniform convergence of $u_k$ on $[t,1]$ as well as 
$\int_{B_t}|\nabla u_k|^N\dx\to 0$, from which easily
follows $J(u_k)\to J(m_t)$. 
If $u_k=m_{t_k}+o(1)\rightharpoonup 0$ with $t_k\to 1$, an argument repetitive
of that for the case $u_k\to m_t$ above will give
$J(u_k)\to 0=J(u)$. We have, therefore, with necessity, a renamed
subsequence  $u_k=m_{t_k}+o(1)$ with $t_k\to 0$.
\end{proof}

Let

\begin{equation}
\label{gauge}
h_su(r)\eqdef s^{-1/N'}u(r^s), s>0, u\in W_{0,r}^{1,N}(B),
\end{equation}

Elementary calculations show that the operators \eqref{gauge} form a multiplicative group of linear isometries 
on $W_{0,r}^{1,N}(B)$. 
Furthermore, for every $s>0$ and $t\in(0,1)$,
\begin{equation}
\label{g_mu}
h_sm_t=m_{t^{1/s}}.
\end{equation}

We also note a well-known radial estimate: for each $u\in W_{0,r}^{1,N}(B)$,
\begin{equation}
\label{pointwise}
\sup_{r\in(0,1)}|u(r)|(\log(1/r)^{-1/N'}\le \omega_{N-1}^{-1/N} \|\nabla u\|_N.
\end{equation}




\end{document}